\renewcommand{\phi}{\varphi}
\newcommand{\RR}{\mathbb{R}}
\newcommand{\ZZ}{\mathbb{Z}}
\newcommand{\QQ}{\mathbb{Q}}
\newcommand{\CC}{\mathbb{C}}
\newcommand{\bS}{\mathbb{S}}
\newcommand{\bA}{\mathbf{A}}
\newcommand{\id}{\mathrm{id}}
\newcommand{\eps}{\varepsilon}
\newcommand{\Iso}{\mathrm{Iso}}
\newcommand{\Aut}{\mathrm{Aut}}
\newcommand{\Res}{\mathrm{Res}}
\newcommand{\IE}{\mathrm{IE}}
\newcommand{\cB}{\mathcal{B}}
\newcommand{\cS}{\mathcal{S}}
\newcommand{\cH}{\mathcal{H}}
\newcommand{\pr}{\mathrm{pr}}
\renewcommand{\emptyset}{\varnothing}
\renewcommand{\setminus}{-}
\theoremstyle{plain}
\newtheorem{Thm}[subsection]{Theorem}
\newtheorem{Cor}[subsection]{Corollary}
\newtheorem{Prop}[subsection]{Proposition}
\newtheorem{Lem}[subsection]{Lemma}
\theoremstyle{definition}
\newtheorem{Facts}[subsection]{Facts}
\newtheorem{Ex}[subsection]{Example}
\newtheorem{Def}[subsection]{Definition}
\title{\bf Some remarks on proper actions, proper metric spaces, and buildings}
\author{Linus Kramer\thanks{Funded by the Deutsche Forschungsgemeinschaft through a Polish-German
\emph{Beethoven} grant KR1668/11, and under
Germany's Excellence Strategy EXC 2044-390685587, Mathematics M\"unster: Dynamics-Geometry-Structure.}
}
\begin{document}

\date{\em Dedicated to the memory of Helmut Reiner Salzmann}

\maketitle

\begin{abstract}
We discuss various aspects of isometric group actions on proper metric spaces.
As one application, we show that a proper and Weyl transitive action on a 
euclidean building is strongly transitive on the maximal atlas 
(the complete apartment system) of the building.
\end{abstract}

The purpose of this article is to supply references and complete proofs
for some useful facts about proper group actions on proper metric spaces.
These results are then applied to group actions on metric simplicial complexes,
and in particular to group actions on buildings.
Some of these results are known as folklore, though it is
in some cases not so easy to find references or reliable proofs.

The first section contains a discussion of proper maps and proper actions
of topological groups in general. The second section discusses isometry groups of
metric spaces, and in particular isometry groups of proper metric spaces.
The third section applies these results to $M_\kappa$-simplicial complexes, which have
become a central tool in geometric group theory.
The fourth section introduces buildings (in their various manifestations).
One (reassuring) result for buildings is that all reasonable topologies
on their automorphism groups coincide.
The notion of a proper action of a topological group on a building is, however, more delicate.
The fifth an final section contains result about various types of 
transitive actions on buildings. For the case of proper actions, these 
transitivity conditions
are shown to be essentially equivalent.

All topological spaces and groups are assumed to be Hausdorff spaces.

\section{Proper maps and proper actions}
A continuous map $f:X\longrightarrow Y$ is called \emph{proper}
\footnote{This is Bourbaki's notion of properness and the reader should be warned
that other, different notions of proper maps appear in the literature.}
if it is closed and if preimages of points are compact \cite[I.\S10.2]{BourTop}.
Then the preimage of every compact subset is compact.
Proper maps are also called \emph{perfect maps} \cite[\S3.7]{Eng}.
If $Y$ is a \emph{$k$-space} (a subset $B\subseteq Y$ is closed if and only if
$B\cap K$ is compact for every compact subset $K\subseteq Y$),
then a continuous map $f:X\longrightarrow Y$ is proper if and only if the preimage
of every compact subset $B\subseteq Y$ is compact \cite[3.7.18]{Eng}.
We note that every first countable space, every locally compact space  and every CW complex is
a $k$-space, cp.~\cite[XI.9.3]{Dug} and \cite[App.]{Hatcher}. A product of $k$-spaces is not necessarily a $k$-space,
but Whitehead's Lemma assures that the product of a $k$-space and a locally compact space is again a $k$-space
\cite[XII.4.4]{Dug}.
We collect some facts about proper maps which can be found in Bourbaki \cite{BourTop}
and Engelking \cite{Eng}.

\begin{Facts}
\label{ProperFacts}
If $Y$ is locally compact and if ${f:X\longrightarrow Y}$ is
proper, then $X$ is also locally compact \cite[3.7.24]{Eng}.
If $f:X\longrightarrow Y$ is a proper map, if $A\subseteq X$ is
a closed subset, if $B\subseteq Y$ is any subset, and if $f(A)\subseteq B$,
then the restriction-corestriction $f|_A^B:A\longrightarrow B$ is also proper.
Similarly, the restriction-corestriction ${f|_{f^{-1}(B)}^B:f^{-1}(B)\longrightarrow B}$
is proper \cite[3.7.6]{Eng}.
The cartesian product of proper maps is again a proper map \cite[3.7.9]{Eng}.

Suppose that 
\[
\begin{tikzcd}
 X \arrow{r}{f}\arrow{dr}[swap]{h} & Y \arrow{d}{g} \\
 & Z
\end{tikzcd}          
             \]
is a commutative diagram of continuous maps.
If $f$ and $g$ are proper, then $h$ is also proper. If $h$ is proper,
then both $f$ and the restriction $g|_{f(X)}:f(X)\longrightarrow Z$ are proper \cite[3.7.3,3.7.5]{Eng}.
\end{Facts}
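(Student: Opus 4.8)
The plan is to argue throughout from Bourbaki's definition — a closed map with compact point preimages — together with its recorded consequence that preimages of compact sets are compact, and to treat the two assertions separately.

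For the first assertion, suppose $f$ and $g$ are proper and put $h=g\circ f$. Closedness of $h$ is immediate, since a composite of closed maps is closed. For the fibres I would write $h^{-1}(z)=f^{-1}\bigl(g^{-1}(z)\bigr)$; here $g^{-1}(z)$ is compact because $g$ is proper, and its $f$-preimage is compact because $f$, being proper, pulls compact sets back to compact sets. Hence $h$ is closed with compact fibres, i.e.\ proper.

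For the converse, assume $h$ proper; I first show $f$ is proper. The fibres are easy: for $y\in Y$ one has $f^{-1}(y)\subseteq f^{-1}\bigl(g^{-1}(g(y))\bigr)=h^{-1}(g(y))$, which is compact, and $f^{-1}(y)$ is closed in $X$ because $Y$ is Hausdorff, so it is a closed subset of a compact set and therefore compact. The real content is that $f$ is a \emph{closed} map, and this is the step I expect to be the main obstacle: closedness of $h$ together with mere continuity of $g$ does not force $f$ to be closed (collapsing $Z$ to a point already breaks this), so one must genuinely use the compactness of the fibres of $h$. To carry this out, let $A\subseteq X$ be closed and $y_0\notin f(A)$; I must produce a neighbourhood of $y_0$ missing $f(A)$. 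Put $z_0=g(y_0)$ and consider the compact set $K=f\bigl(A\cap h^{-1}(z_0)\bigr)$, which does not contain $y_0$ since $K\subseteq f(A)$. Using that $Y$ is Hausdorff, separate $y_0$ from $K$ by disjoint open sets $V\ni y_0$ and $U\supseteq K$. The set $A'=A - f^{-1}(U)$ is closed in $X$, and because $K\subseteq U$ absorbs the whole fibre part $A\cap h^{-1}(z_0)$ one checks $z_0\notin h(A')$; as $h$ is closed, $h(A')$ is closed, so there is an open $W\ni z_0$ with $W\cap h(A')=\emptyset$. Then $V\cap g^{-1}(W)$ is the desired neighbourhood: a point of it lying in $f(A)$ would be $f(a)$ for some $a\in A$ with $h(a)\in W$, and splitting according to whether $a\in A'$ or $a\in f^{-1}(U)$ contradicts either $W\cap h(A')=\emptyset$ or $U\cap V=\emptyset$. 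Thus $f(A)$ is closed and $f$ is proper.

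Finally, for properness of $g|_{f(X)}$ I would corestrict $f$ to a surjection $q\colon X\longrightarrow f(X)$, which is proper by the restriction–corestriction fact above, and set $p=g|_{f(X)}$, so that $h=p\circ q$. Since $q$ is surjective, every closed $C\subseteq f(X)$ satisfies $p(C)=h\bigl(q^{-1}(C)\bigr)$, which is closed because $q^{-1}(C)$ is closed and $h$ is closed; likewise $p^{-1}(z)=q\bigl(h^{-1}(z)\bigr)$ is compact as a continuous image of the compact fibre $h^{-1}(z)$. Hence $p$ is closed with compact fibres, i.e.\ proper. I note that this last step uses only the continuity and surjectivity of $q$ together with properness of $h$, so it is independent of the tube-type argument that settled the closedness of $f$.
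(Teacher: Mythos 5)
Your proposal is correct in everything it proves, but note that the paper offers no proof at all for this statement: it is a list of facts backed entirely by citations to Engelking, so any comparison is really with the standard textbook arguments. Within the portion you address (the commutative-diagram facts), your route is exactly the classical one. The composition direction and the fibre computations are routine and right; the genuine content, as you correctly identify, is that $h$ proper forces $f$ closed, and your tube-type argument --- separate $y_0$ from the compact set $K=f\bigl(A\cap h^{-1}(z_0)\bigr)$ by disjoint opens $U,V$, pass to the closed set $A'=A - f^{-1}(U)$, use closedness of $h$ to find $W\ni z_0$ avoiding $h(A')$, and check that $V\cap g^{-1}(W)$ misses $f(A)$ by the dichotomy $a\in A'$ or $a\in f^{-1}(U)$ --- is complete and is essentially the proof in Engelking 3.7.5 and Bourbaki. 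It correctly uses the blanket Hausdorff assumption both for closedness of $f^{-1}(y)$ and for separating a point from a compact set. The argument for $g|_{f(X)}$ via the surjective corestriction $q$ is also sound, and your observation that it needs only continuity and surjectivity of $q$ (not its properness) is a nice economy. What you do not address are the first three facts of the statement (transfer of local compactness, restriction--corestriction, products); you invoke the restriction--corestriction fact as given, which is legitimate since the paper itself only cites it, but a fully self-contained write-up would either prove it (it follows quickly from the definition for closed $A$, and for $f^{-1}(B)$ from the observation that closedness can be checked locally on the target) or at least flag the dependency. What your approach buys over the paper's bare citations is a proof readable without external references; what it costs is length, which is presumably why the paper delegates these facts to Engelking.
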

The following observation turns out to be useful.
\begin{Lem}\label{ProperLemmma}
Let $f:X\times Y\longrightarrow Z$ be a continuous map. 
For $C\times B\subseteq Z\times Y$, put 
\begin{align*}
X_{B,C} & =\{a\in X\mid\text{ there is $b\in B$ such that $f(a,b)\in C$}\} \\
&= \{a\in X\mid f(\{a\}\times B)\cap C\neq\emptyset\}.
 \end{align*}
If $Z\times Y$ is a $k$-space, then the following are 
equivalent.
\begin{enumerate}[\rm(i),nosep]
 \item The map $F:X\times Y\longrightarrow Z\times Y$ that maps $(x,y)$ to $(f(x,y),y)$ is proper.
 \item For all compact subsets $C\times B\subseteq Z\times Y$,
 the set $X_{B,C}$ is compact.
 \item For all compact subsets $C\times B\subseteq Z\times Y$,
 the set $X_{B,C}$ has compact closure in $X$.
\end{enumerate}
\end{Lem}
In general, (i) $\Longrightarrow$ (ii) $\Longrightarrow$ (ii) $\Longrightarrow$ (iii).
\begin{proof}
 We show first (i) $\Longrightarrow$ (ii). Let $C\times B$ be compact. Then the set 
 \[K=\{(x,y)\in X\times Y\mid f(x,y)\in C\times B\text{ and }y\in B\}=
  \{(a,b)\in X\times B\mid f(a,b)\in C\}
 \]
 is compact, and so is its projection $\pr_X(K)=X_{B,C}$. 
 
 Clearly, (ii) $\Longrightarrow$ (iii). 
 Now we show (iii) $\Longrightarrow$ (i), assuming that $Z\times Y$ is a $k$-space. 
 Let $L\subseteq Z\times Y$ be compact.
 We have to show that the preimage of $L$ in $X\times Y$ is compact.
 Put $B=\pr_Y(L)$ and $C=\pr_Z(L)$. Then $C\times B$ is compact and 
 contains $L$. The preimage of $C\times B$ in $X\times Y$ is closed and contained
 in compact set $\overline{X_{B,C}}\times B$. Therefore the preimage of $L$ is also compact.
\end{proof}
If in addition $Z=Y$ holds in Lemma~\ref{ProperLemmma}, then
it suffices to show that $X_{B,B}$ has compact closure for all compact sets $B\subseteq Y$,
because $B\times C\subseteq (B\cup C)\times (B\cup C)$.
\begin{Prop}\label{ProperProp}
Suppose that $F:X\times Y\longrightarrow Z\times Y$ is a proper map of the form 
$F(x,y)=(f(x,y),y)$. Then the following hold.
\begin{enumerate}[\rm(i),nosep]
 \item For each $y\in Y$, the set $X(y)=f(X\times\{y\})\subseteq Z$ is closed
 and the map $X\longrightarrow X(y)$ that maps $x$ to $f(x,y)$ is proper and in particular a quotient map.
 \item For all $(y,z)\in Y\times Z$, the set $X_{\{y\},\{z\}}=\{x\in X\mid f(x,y)=z\}$ is compact.
 \item Suppose that $Y=Z$, that the map $f$ is open and that $y\sim f(x,y)$
 is an equivalence relation on $Y$. Then the quotient map $q:Y\longrightarrow Y/{\sim}$ is open and 
 $Y/{\sim}$ is a Hausdorff space.
\end{enumerate}
\end{Prop}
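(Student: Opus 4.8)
The plan is to reduce everything to the slice maps $f_y:=f(\,\cdot\,,y)\colon X\to Z$ and to exploit that $F$, being proper, is in particular closed. For part (a), I would first observe that $X\times\{y\}$ is closed in $X\times Y$ (as $Y$ is Hausdorff) and that $F$ maps it into $Z\times\{y\}$. By the restriction-corestriction statement in Facts~\ref{ProperFacts}, the map $F|_{X\times\{y\}}^{Z\times\{y\}}$ is proper; under the evident homeomorphisms $X\times\{y\}\cong X$ and $Z\times\{y\}\cong Z$ this is precisely $f_y$, so $f_y$ is proper. A proper map is closed, whence $X(y)=f_y(X)$ is closed in $Z$. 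Corestricting $f_y$ to its image $X(y)$ preserves properness (again by Facts~\ref{ProperFacts}), and a proper---hence closed---continuous surjection is a quotient map. This settles (a).

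Part (b) is then immediate: $A_{y,z}=f_y^{-1}(z)$ is the preimage of a point under the proper map $f_y$, hence compact by the definition of properness (the case $z\notin X(y)$ giving the empty, and thus compact, preimage). Alternatively one may read off compactness directly from condition (ii) of Lemma~\ref{ProperLemmma} with $B=\{y\}$ and $C=\{z\}$, which applies since (i)$\Rightarrow$(ii) holds in general.

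For part (c) I would prove the two assertions separately. To see that $q$ is open, I compute the saturation of an open set $U\subseteq Y$. Using that $\sim$ is symmetric and that $a\sim b$ means $b=f(x,a)$ for some $x$, the saturation is
\[
q^{-1}(q(U))=\{f(x,u)\mid x\in X,\ u\in U\}=f(X\times U),
\]
which is open because $f$ is open and $X\times U$ is open; hence $q(U)$ is open, i.e.\ $q$ is an open map. To see that $Y/{\sim}$ is Hausdorff I would use the standard criterion that an open quotient is Hausdorff as soon as the relation $R=\{(y,y')\mid y\sim y'\}$ is closed in $Y\times Y$. Here $R$ is the image of $F$ after interchanging the two factors of $Y\times Y$, since $F(X\times Y)=\{(f(x,y),y)\}$ while $R=\{(y,f(x,y))\}$; as $F$ is proper it is closed, so $F(X\times Y)$ is closed and therefore so is its coordinate-swap $R$. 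Combining openness of $q$ with closedness of $R$ yields the Hausdorff property.

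The routine parts are the identifications with $f_y$ and the symmetry bookkeeping for the saturation. The one step deserving care---and the main obstacle---is the Hausdorff claim: I must correctly identify $R$ with the coordinate-swap of the closed set $F(X\times Y)$ and invoke (or briefly reprove) the lemma that an open quotient map with closed relation has Hausdorff quotient, for which openness of $q$ is exactly what makes the separating images $q(U_1),q(U_2)$ disjoint.
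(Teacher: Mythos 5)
Your proposal is correct and follows essentially the same route as the paper: restriction--corestriction of $F$ to the slice $X\times\{y\}$ for (a), compactness of a point-fiber for (b), and for (c) the computation $q^{-1}(q(U))=f(X\times U)$ together with the observation that the relation $R$ is (up to the coordinate swap, which is harmless by symmetry) the closed set $F(X\times Y)$, combined with the open-quotient Hausdorff criterion that the paper reproves inline via the openness of $q\times q$. The only cosmetic difference is that you derive (b) from properness of the slice map $f_y$ rather than from $A_{y,z}=\pr_X(F^{-1}(z,y))$; both are immediate.
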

\begin{proof}
 The restriction-corestriction $X\times\{y\}\longrightarrow X(y)\times\{y\}$
 is proper by \ref{ProperFacts}. In particular, it is a closed surjective map and (i) holds.
 For (ii) we just note that $X_{\{y\},\{z\}}=\pr_X(F^{-1}(z,y))$ is compact.
 For (iii), let  $U\subseteq Y$ be open. Then $q^{-1}(q(U))=f(X\times U)$
 is open, hence $q$ is open. 
 But then $q\times q$ is also open, and hence is a quotient map.
 The preimage of the diagonal $D$ in $Y/{\sim}\times Y/{\sim}$
 is $E=F(X\times Y)\subseteq Y\times Y$, which is closed.
 Hence $D$ is closed, and therefore $Y/{\sim}$ is Hausdorff.
\end{proof}
\begin{Def}
Suppose that $G$ is a topological group, that $X$ is a topological space, and that $G\times X\longrightarrow X$
is a continuous action. 
The kernel $N$ of a continuous action $G\times X\longrightarrow X$ is a closed normal subgroup, and the 
induced action $G/N\times X\longrightarrow X$ is again continuous, because the quotient map $G\longrightarrow G/N$ is open.
The action is called \emph{proper} 
\footnote{Again, the reader should be warned that other notions of proper
actions appear in the literature, see e.g.~\cite{Anton,Kapo}.}
if the map
\[
 G\times X\longrightarrow X\times X,\ (g,x)\longmapsto (gx,x)
\]
is proper. 
\end{Def}
\begin{Prop}\label{ProperActionProperties}
Let $G\times X\longrightarrow X$ be a continuous action of a topological group on a topological space $X$.
\begin{enumerate}[\rm(i),nosep]
 \item If $X\times X$ is a $k$-space (eg. if $X$ is first countable or locally compact) then the action is proper if and only if
 for all compact subsets $B,C\subseteq X$ the set $G_{B,C}=\{g\in G\mid g(B)\cap C\neq\emptyset\}$ is compact
 (or, equivalently, $G_{B,C}$ has compact closure in $G$).
 \item If the action is proper, then for every $x\in X$ the $G$-orbit $G(x)\subseteq X$ is closed, the stabilizer $G_x$ is compact,
 and the natural map $G/G_x\longrightarrow G(x)$ is a homeomorphism.
 The orbit space $G\backslash X$ is a Hausdorff space and the quotient map $q:X\longrightarrow G\backslash X$ is open.
\end{enumerate}
\end{Prop}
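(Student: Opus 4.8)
The plan is to recognize that the defining map of a proper action,
\[
F\colon G\times X\longrightarrow X\times X,\qquad F(g,x)=(gx,x),
\]
is exactly of the form treated in Lemma~\ref{ProperLemmma} and Proposition~\ref{ProperProp}: the first factor $G$ plays the role of the domain's ``$X$'', we take $Y=Z=X$, and $f(g,x)=gx$ is the action map. Both statements then apply once the relabelling is in place, so most of the work is bookkeeping rather than new argument.

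For (a), I would first observe that the hypothesis ``$X$ first countable or locally compact'' is imposed precisely to make $X\times X$ a $k$-space: a finite product of first countable spaces is first countable, a finite product of locally compact spaces is locally compact, and in either case the result is a $k$-space by the remarks preceding \ref{ProperFacts}. With $X\times X$ a $k$-space, Lemma~\ref{ProperLemmma} gives the equivalence of properness of $F$ with compactness of all the sets $A_{B,C}$. It then remains only to identify $A_{B,C}$ with $G_{B,C}$: unwinding the definition, $A_{B,C}=\{g\in G\mid gb\in C\text{ for some }b\in B\}=\{g\in G\mid g(B)\cap C\neq\emptyset\}=G_{B,C}$. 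I note that one direction, properness $\Rightarrow$ compactness of $G_{B,C}$, needs no hypothesis on $X$, since the remark after the lemma says (i) always implies (ii).

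For (b), I would feed $F$ into Proposition~\ref{ProperProp}. Part (a) of that proposition shows that for each $x$ the orbit $X(x)=f(G\times\{x\})=G(x)$ is closed and that the orbit map $\pi_x\colon G\to G(x)$, $g\mapsto gx$, is proper, hence a quotient map. The fibres of $\pi_x$ are exactly the left cosets $gG_x$, so $\pi_x$ and the canonical projection $G\to G/G_x$ are quotient maps with identical fibres; the induced continuous bijection $G/G_x\to G(x)$ is therefore a homeomorphism. Part (b) of the proposition, applied with $y=z=x$, identifies the stabiliser $G_x=\{g\mid gx=x\}$ with the compact set $A_{x,x}$. For the statements about $G\backslash X$ I would invoke part (c): the relation $x\sim gx$ is the orbit relation, which is an equivalence relation because $G$ is a group, and its quotient space is $G\backslash X$.

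The one genuine input still required by part (c) is that the action map $f(g,x)=gx$ be open, and this is the step I would single out as needing a short argument. Writing $\Theta\colon G\times X\to G\times X$, $\Theta(g,x)=(g,gx)$, one checks that $\Theta$ is a homeomorphism, since its inverse $(g,y)\mapsto(g,g^{-1}y)$ is continuous by continuity of the action and of inversion in $G$; then $f=\pr_X\circ\Theta$ is a composition of a homeomorphism with the open projection, hence open. Granting this, part (c) yields that $q\colon X\to G\backslash X$ is open and that $G\backslash X$ is Hausdorff. No step presents a real obstacle; the only thing to be careful about is tracking the relabelling of the three factors consistently.
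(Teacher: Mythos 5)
Your proposal is correct and follows exactly the route the paper intends: the paper's proof is the one-liner ``Claim (a) follows from Lemma~\ref{ProperLemmma} and Claim (b) is a consequence of Proposition~\ref{ProperProp}'', and your write-up simply supplies the relabelling and the two details the paper leaves implicit (that first countability or local compactness of $X$ makes $X\times X$ a $k$-space, and that the action map is open via the homeomorphism $(g,x)\mapsto(g,gx)$). Nothing to correct.
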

\begin{proof}
Claim (i) follows from Lemma~\ref{ProperLemmma} and Claim (ii) is a consequence of Proposition~\ref{ProperProp}.
See also \cite[III.\S4.2]{BourTop} for a direct proof.
\end{proof}
If $G$ acts properly on $X$, then the kernel $N$ of the action is a compact normal subgroup, and the 
induced action of $G/N$ on $X$ is also proper. Conversely, If $G$ acts continuously and with a compact
kernel $N$ on $X$, then the action of $G$ is proper if the induced action of $G/N$ is proper,
cp.~Lemma~\ref{ProperActionEquivariant} below.
Hence it suffices in most cases to study proper and faithful actions.
For later reference we record the following consequence.
\begin{Lem}\label{ActionOnDiscreteSet}
Let $X$ be a discrete topological space and let $G$ be a topological group that acts on $X$.
\begin{enumerate}[\rm(i),nosep]
 \item The action is continuous if and only if all point stabilizers are open.
 \item The action is proper if and only if all point stabilizers are compact and open.
 \item If $G$ is a discrete group, then the action is proper if and only if every point stabilizer is finite.
\end{enumerate}
\end{Lem}
\begin{Lem}\label{ProperActionEquivariant}
Let $G\times X\longrightarrow X$ and $K\times Y\longrightarrow Y$ be continuous actions of topological
groups, let $\alpha:G\longrightarrow K$ be a morphism of topological groups, and
let $f:X\longrightarrow Y$ be an equivariant and proper map, so that the diagram
\[
\begin{tikzcd}
 G\times X \arrow{r}\arrow{d}[swap]{\alpha\times f} & X \arrow{d}{f} \\
 K\times Y \arrow{r} & Y.
\end{tikzcd}
\]
commutes.
If the action of $G$ is proper, then $\alpha$ is a proper map.
In particular, $\alpha$ is closed and has a compact kernel.

Conversely, if $\alpha$ is closed with compact kernel, and if the  $K$-action is
proper, then the $G$-action is also proper.
\end{Lem}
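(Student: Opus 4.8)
The plan is to reduce everything to the two \emph{action maps}
$\Phi_G\colon G\times X\to X\times X,\ (g,x)\mapsto(gx,x)$ and
$\Phi_K\colon K\times Y\to Y\times Y,\ (k,y)\mapsto(ky,y)$,
whose properness is by definition the properness of the respective actions.
The crux is that equivariance of $f$ makes the square
\[
\begin{tikzcd}
 G\times X \arrow{r}{\Phi_G}\arrow{d}[swap]{\alpha\times f} & X\times X \arrow{d}{f\times f} \\
 K\times Y \arrow{r}{\Phi_K} & Y\times Y
\end{tikzcd}
\]
commute, since $(f\times f)(gx,x)=(f(gx),f(x))=(\alpha(g)f(x),f(x))=\Phi_K(\alpha(g),f(x))$.
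Once this square is in hand, both implications are bookkeeping with the permanence properties in Facts~\ref{ProperFacts} (compositions, cartesian products, and restriction--corestrictions of proper maps). Throughout I use that $f$, and hence $f\times f$, is proper.

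First I would treat the forward direction. Assuming the $G$-action is proper, the composite $(f\times f)\circ\Phi_G=\Phi_K\circ(\alpha\times f)$ is proper, being a composition of the proper maps $\Phi_G$ and $f\times f$. Reading this composite through the triangle $\alpha\times f$ followed by $\Phi_K$ and applying the relevant clause of Facts~\ref{ProperFacts}, I conclude that $\alpha\times f$ itself is proper. To extract $\alpha$, fix a point $x_0\in X$; since all spaces are Hausdorff, $G\times\{x_0\}$ is closed, and the restriction--corestriction of $\alpha\times f$ to $G\times\{x_0\}\to K\times\{f(x_0)\}$ is proper. Under the evident homeomorphisms this map is exactly $\alpha$, so $\alpha$ is proper; in particular it is closed, and its kernel $\alpha^{-1}(e_K)$ is compact as the preimage of a point.

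For the converse I would first observe that a closed homomorphism $\alpha$ with compact kernel is automatically proper: each nonempty fibre $\alpha^{-1}(k)$ equals $g\ker\alpha$ for any $g$ with $\alpha(g)=k$, hence is a left translate of the compact kernel and therefore compact. Then $\alpha\times f$ is a product of proper maps, so again $\Phi_K\circ(\alpha\times f)=(f\times f)\circ\Phi_G$ is proper. This time I read the composite through the triangle $\Phi_G$ followed by $f\times f$; the triangle clause of Facts~\ref{ProperFacts} now yields that the first factor $\Phi_G$ is proper, which is precisely properness of the $G$-action.

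The conceptual obstacle, and the only step that is not pure formalism, is recognising the commuting square above; after that the argument is driven entirely by the functorial behaviour of proper maps. The two small genuinely new moves are the slice restriction to $G\times\{x_0\}$ that recovers $\alpha$ from $\alpha\times f$ in the forward direction, and the identification of the fibres of a closed homomorphism with compact kernel as translates of the kernel in the converse.
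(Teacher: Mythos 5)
Your proof is correct and follows essentially the same route as the paper: both hinge on the commuting square relating $\Phi_G$ and $\Phi_K$ via $\alpha\times f$ and $f\times f$, use the composition/triangle clauses of Facts~\ref{ProperFacts} in each direction, and recover $\alpha$ by restricting to a slice $G\times\{x_0\}$. You merely spell out two steps the paper leaves implicit (that the slice restriction \emph{is} $\alpha$, and that a closed homomorphism with compact kernel is proper because its fibres are translates of the kernel).
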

\begin{proof}
 We consider the commutative diagram
 \[
\begin{tikzcd}
 G\times X \arrow{r}\arrow{d}[swap]{\alpha\times f} & X\times X \arrow{d}{f\times f} \\
 K\times Y \arrow{r} & Y\times Y.
\end{tikzcd}
\]
Assume that the $G$-action is proper.
By \ref{ProperFacts}, the map $f\times f$ is proper and therefore $\alpha\times f$ is also proper.
For $x\in X$, the restriction-corestriction $G\times\{x\}\longrightarrow K\times\{f(x)\}$
is also proper. Hence $\alpha$ is closed, with compact kernel.

If $\alpha$ is closed and with compact kernel, then $\alpha$ is proper and therefore
$\alpha\times f$ is also proper. Then $G\times X\longrightarrow X\times X$ is proper by~\ref{ProperFacts},
provided that $K\times Y\longrightarrow Y\times Y$ is proper.
\end{proof}
\begin{Cor}\label{InducedIsProper}
 Suppose that $K\times Y\longrightarrow Y$ is a proper action. If $G\subseteq K$ is a
 closed subgroup
 and if $X\subseteq Y$ is a closed  $G$-invariant subset, then the restricted action 
 $G\times X\longrightarrow X$ is also proper.
\end{Cor}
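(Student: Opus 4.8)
The plan is to realize the restricted action as a special case of the equivariant setup of Lemma~\ref{ProperActionEquivariant} and then invoke its converse direction. I would take both $\alpha:G\longrightarrow K$ and $f:X\longrightarrow Y$ to be the respective inclusion maps. Since $X$ is $G$-invariant, the $G$-action on $X$ is just the restriction of the $K$-action, so $f$ is equivariant with respect to $\alpha$ and the square
\[
\begin{tikzcd}
 G\times X \arrow{r}\arrow{d}[swap]{\alpha\times f} & X \arrow{d}{f} \\
 K\times Y \arrow{r} & Y
\end{tikzcd}
\]
commutes.

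Next I would verify the two properness hypotheses required by the converse statement of the lemma. Because $X$ is a \emph{closed} subset of $Y$, every set closed in $X$ is already closed in $Y$, so the inclusion $f$ is a closed map; its point-preimages are singletons or empty, hence compact, and thus $f$ is proper. Likewise, because $G$ is a \emph{closed} subgroup of $K$, a subset closed in $G$ has the form $G\cap C$ with $C$ closed in $K$ and is therefore closed in $K$, so $\alpha$ is a closed morphism of topological groups; its kernel is trivial and in particular compact.

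With these verifications in hand, the converse statement of Lemma~\ref{ProperActionEquivariant} applies directly: $\alpha$ is closed with compact kernel, $f$ is equivariant and proper, and the $K$-action is proper by hypothesis. It follows that the $G$-action on $X$ is proper, which is the desired conclusion.

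I do not expect a genuine obstacle here; the only points requiring care are the two elementary topological checks above. The essential one is that properness of the inclusion $f:X\longrightarrow Y$ uses precisely the hypothesis that $X$ is closed in $Y$ — a merely $G$-invariant but non-closed subset could fail to give a proper restricted action — and similarly that closedness of the subgroup $G$ is what makes $\alpha$ a closed map.
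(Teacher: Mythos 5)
Your argument is correct and is precisely the deduction the paper intends: the corollary is stated immediately after Lemma~\ref{ProperActionEquivariant} with no separate proof, and applying the converse direction of that lemma with $\alpha$ and $f$ the two inclusion maps is exactly the intended route. Your two auxiliary checks --- that the inclusion of a closed subspace is a proper map and that the inclusion of a closed subgroup is a closed morphism with trivial kernel --- are the right ones and are carried out correctly.
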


The following basic example shows that a continuous action with trivial stabilizers
need not be proper. 
\begin{Ex}\label{RonT}
Let $S=\{u\in \CC\mid \mathbb |u|=1\}$ and let $a$ be an irrational real number.
Then $\mathbb R$ acts freely and continuously
on $S\times S$ via $(t,u,v)\longmapsto (\exp(\sqrt{-1}t)u,\exp(\sqrt{-1}at)v)$, and the same holds
for the discrete group $\ZZ\subseteq\RR$. The orbits of these two actions are dense
and therefore these actions are not proper.
\footnote{This follows also from the fact that $B=C=S$ is compact, while 
$\RR=\RR_{S,S}$ is not.}
\end{Ex}
A topological group acting properly on a locally compact space $X$ is necessarily locally compact
by~\ref{ProperFacts}. Also, every compact (eg. finite) group that acts continuously acts properly.
However, proper actions are not necessarily related to local compactness.

\begin{Ex}\label{Banach}
Let $G$ be a metrizable topological group. Then the left regular action of $G$ on itself is proper.
Indeed, if $A,B\subseteq G$ are compact, then $G_{A,B}=\{ba^{-1}\mid a\in A, b\in B\}=BA^{-1}$ is compact.
\end{Ex}

\section{Isometry groups of proper metric spaces}

Let $(X,d)$ be a metric space. The isometric embeddings $X\longrightarrow X$
form a monoid, which we denote by $\IE(X)$. The group of invertible elements
in this monoid is the isometry group $\Iso(X)\subseteq\IE(X)$. We note that
every $g\in\IE(X)$ is a proper map,

The following is well-known
\cite[X.\S3.5]{BourTop}. For the sake of completeness, we include the proof.
For a subset $Y\subseteq X$ and $\eps>0$ we put \[B_\eps(Y)=\{x\in X\mid d(x,y)<\eps\text{ for some }y\in Y\}.\]
\begin{Lem}\label{IsoLemma}
Let $X$ be a metric space. Then
the compact-open topology and the topology of pointwise convergence coincide on $\IE(X)$.
With respect to this topology,
$\IE(X)$ is a topological monoid, the action $\IE(X)\times X\longrightarrow X$
is continuous and $\Iso(X)$ is a topological group. 
\end{Lem}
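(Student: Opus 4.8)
The plan is to work throughout in the pointwise convergence topology, whose subbasic open sets are $\{g\in\IE(X)\mid g(x)\in U\}$ for $x\in X$ and $U\subseteq X$ open, and to reduce every assertion to the single \emph{Lipschitz estimate}
\[
 d(g(y),h(y))\le d(g(x),h(x))+2\,d(x,y),
\]
valid for all $g,h\in\IE(X)$ and all $x,y\in X$. It follows immediately from the triangle inequality together with $d(g(x),g(y))=d(x,y)=d(h(x),h(y))$, and it expresses that the map $g\mapsto g(y)$ depends uniformly (in fact $1$-Lipschitz) on the reference value $g(x)$ as $y$ ranges over a ball around $x$.

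For the coincidence of the two topologies, I would first note that the compact-open topology is always at least as fine as the pointwise topology, since singletons are compact. For the reverse inclusion, let $K\subseteq X$ be compact, let $U\subseteq X$ be open, and let $g_0\in\IE(X)$ satisfy $g_0(K)\subseteq U$. As $g_0(K)$ is compact and contained in $U$, there is $\eps>0$ with $B_\eps(g_0(K))\subseteq U$. Cover $K$ by finitely many balls $B_\delta(x_1),\dots,B_\delta(x_n)$ with $x_i\in K$ and $\delta=\eps/4$. Then $V=\{g\mid d(g(x_i),g_0(x_i))<\delta,\ i=1,\dots,n\}$ is a pointwise neighborhood of $g_0$, and I claim $V\subseteq\{g\mid g(K)\subseteq U\}$: given $g\in V$ and $y\in K$, choose $x_i$ with $d(y,x_i)<\delta$; the Lipschitz estimate yields $d(g(y),g_0(y))\le d(g(x_i),g_0(x_i))+2\,d(x_i,y)<3\delta<\eps$, and since $g_0(y)\in g_0(K)$ this gives $g(y)\in B_\eps(g_0(K))\subseteq U$. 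Thus each compact-open subbasic set is pointwise open. The main obstacle sits precisely here: it is the uniformity packaged in the Lipschitz estimate, combined with a finite subcover of $K$, that allows pointwise control at the finitely many points $x_i$ to dominate the behaviour of $g$ on all of $K$.

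With the topologies identified, continuity of composition and of the action are routine triangle-inequality estimates in the pointwise topology. For composition at $(g_0,h_0)$ and a fixed $x$ I would split
\[
 d(g(h(x)),g_0(h_0(x)))\le d(h(x),h_0(x))+d(g(h_0(x)),g_0(h_0(x))),
\]
using that $g$ is an isometry on the first summand; controlling $h$ at the point $x$ and $g$ at the point $h_0(x)$ makes both summands small, so $\IE(X)$ is a topological monoid. The action $(g,x)\mapsto g(x)$ is handled identically via $d(g(x),g_0(x_0))\le d(x,x_0)+d(g(x_0),g_0(x_0))$.

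Finally, to see that $\Iso(X)$ is a topological group it remains to verify that inversion is continuous, and here the invertibility of isometries pays off cleanly. Given $g_0\in\Iso(X)$ and a basic neighborhood $\{h\mid d(h(y_j),g_0^{-1}(y_j))<\eps,\ j=1,\dots,m\}$ of $g_0^{-1}$, set $x_j=g_0^{-1}(y_j)$. For any $g\in\Iso(X)$ the isometry identity $d(a,b)=d(g(a),g(b))$ gives
\[
 d\bigl(g^{-1}(y_j),g_0^{-1}(y_j)\bigr)=d\bigl(g^{-1}(y_j),x_j\bigr)=d\bigl(y_j,g(x_j)\bigr)=d\bigl(g_0(x_j),g(x_j)\bigr),
\]
so the pointwise neighborhood $\{g\mid d(g(x_j),g_0(x_j))<\eps,\ j=1,\dots,m\}$ of $g_0$ is carried into the prescribed neighborhood of $g_0^{-1}$ by inversion. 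Hence inversion is continuous, and $\Iso(X)$ is a topological group.
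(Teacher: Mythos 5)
Your proof is correct and takes essentially the same route as the paper's: the same finite-net/triangle-inequality argument (your ``Lipschitz estimate'') to show the compact-open topology is no finer than pointwise convergence, the same splittings for continuity of composition and evaluation, and the same transported-point identity $d(g^{-1}y,g_0^{-1}y)=d(g_0x,gx)$ for continuity of inversion. The only cosmetic difference is that you reduce to a single subbasic set $\langle K;U\rangle$ rather than a finite intersection, which is a legitimate simplification.
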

\begin{proof}
For $A,U\subseteq X$ we put $\langle A;U\rangle=\{g\in\IE(X)\mid g(A)\subseteq U\}$.
Then the sets $\langle F;U\rangle$, where $F$ is finite and $U$ is open form a
subbasis for the topology of pointwise convergence. The sets $\langle K;U\rangle$,
where $K$ is compact and $U$ is open form a subbasis for the compact-open topology.
Since every finite set is compact, this shows that the topology of pointwise
convergence is coarser than the compact-open topology.

Let $W\subseteq \IE(X)$ be an open set in the compact-open topology, 
with $g\in W$. Then there are compact sets $K_1,\ldots,K_n\subseteq X$ and 
open sets $U_1,\ldots,U_n\subseteq X$, with 
\[
g\in\langle K_1;U_1\rangle\cap\cdots\cap\langle K_n;U_n\rangle\subseteq W.
\]
Since the $K_i$ are compact, there exists $\eps>0$ such that
$g(B_\eps(K_i))\subseteq U_i$ holds for $i=1,\ldots,n$. 
Moreover, there are finite subsets 
$F_i\subseteq K_i$ such that 
$K_i\subseteq B_{\eps/3}(F_i)$. 
If $h\in\IE(X)$ with $d(gp,hp)<\eps/3$ for all 
$p\in F_1\cup\cdots\cup F_n=F$,
then $d(gq,hq)<\eps$ holds for all $q\in K_1\cup\cdots\cup K_n$ and thus $h(K_i)\subseteq U_i$
holds for $i=1,\ldots,n$,
that is, 
\[
 g\in\bigcap_{p\in F}\langle\{p\};B_{\eps/3}(f(p))\rangle
 \subseteq 
 \langle K_1;U_1\rangle\cap\cdots\cap\langle K_n;U_n\rangle\subseteq W.
\]
This shows that the compact-open topology is coarser than the topology of pointwise
convergence. Hence both topologies agree. For the remainder of the proof, we work with
the topology of pointwise convergence.

Let $p\in X$ and $g,h,g',h'\in\IE(X)$. If $d(gp,g'p)<\eps/2$ and if $d(hgp,h'gp)<\eps/2$,
then 
\[
 d(hgp,h'g'p)\leq 
 d(hgp,h'gp)+d(h'gp,h'g'p)<
 \eps.
\]
Therefore the composition $\IE(X)\times\IE(X)\longrightarrow\IE(X)$ is continuous.

Let $(g,p)\in\IE(X)\times X$. If $h\in\IE(X)$ and $q\in X$, with $d(p,q)<\eps/2$
and $d(gp,hp)<\eps/2$, then $d(gp,hq))\leq d(gp,hp)+d(hp,hq)<\eps$, hence the joint evaluation
map $\IE(X)\times X\longrightarrow X$ is continuous.

Suppose that $g\in\Iso(X)$ and that $p\in X$. Put $q=g^{-1}(p)$.
If $h\in\Iso(X)$ with $d(hq,gq)=d(hq,p)<\eps$, then $d(h^{-1}p,g^{-1}p)=d(p,hq)<\eps$.
We have shown that $h^{-1}\in\langle \{p\};B_\eps(g^{-1}p)\rangle$, provided that
$h\in\Iso(X)\cap\langle \{q\};B_\eps(gq)\rangle$. Hence inversion is continuous in $\Iso(X)$.
\end{proof}
From now on we endow the monoid $\IE(X)$ with the topology of pointwise convergence.
We recall that a metric space $(X,d)$ is called \emph{proper} if the Heine--Borel Theorem holds
in $X$: a subset is compact if and only if it is bounded and closed.
Proper metric spaces are locally compact and complete.
It follows from Lemma~\ref{ProperLemmma} that a metric space $(X,d)$ is proper if and only if 
the map $(x,y)\longmapsto (d(x,y),y)$ from $X\times X$ to $\RR\times X$ is proper.

Parts of following result are proven in \cite[Thm.~3.1]{AMN}, in \cite[5.2--5.6]{Gao} 
for separable spaces, and in 
\cite[Thm.~2.3]{GKVW}~\footnote{The proof given in \cite{GKVW} contains a gap; we fail to explain why $\Iso(X)\subseteq\IE(X)$
is closed.}.
\begin{Thm}\label{IsoPropThm}
Let $X$ be a proper metric space. Then the map $\IE(X)\times X\longrightarrow X\times X$
that maps $(g,x)$ to $(gx,x)$ is proper.
The topological monoid $\IE(X)$ is locally compact and second countable.
The subgroup $\Iso(X)$ is closed in $\IE(X)$ and hence also locally compact and second countable. 
Moreover, the action
$\Iso(X)\times X\longrightarrow X$ is proper.
\end{Thm}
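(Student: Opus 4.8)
The plan is to establish the four assertions in sequence, deriving each from the preceding ones together with the machinery from Section~1. The properness of the map $(g,x)\mapsto(gx,x)$ is the foundational claim, and everything else follows from it. Since $X$ is proper, it is a $k$-space (being locally compact), and so is $X\times X$; hence by Lemma~\ref{ProperLemmma} (with $Y=Z=X$ and the remark following it) it suffices to show that for every compact $B\subseteq X$ the set $A_{B,B}=\{g\in\IE(X)\mid g(b)\in B\text{ for some }b\in B\}$ is compact in $\IE(X)$. Actually I would work directly with the evaluation map: I want to show that the preimage of a compact set $C\times B\subseteq X\times X$ under $(g,x)\mapsto(gx,x)$ is compact.

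\textbf{The heart of the argument} is showing that the set $S_{B,C}=\{g\in\IE(X)\mid g(b)\in C\text{ for some }b\in B\}$, or more precisely a set of the form $\langle B;C\rangle=\{g\in\IE(X)\mid g(B)\subseteq C\}$ with $B,C$ compact, is compact in the topology of pointwise convergence. First I would fix a countable dense set $D\subseteq X$ (proper metric spaces are separable, being locally compact and second countable—this also gives second countability of $X$). I expect the key step to be an Arzelà--Ascoli style argument: the family of isometric embeddings is uniformly equicontinuous (each $g$ is $1$-Lipschitz), so on the compact set $B$ the relevant restrictions form an equicontinuous family with values in the compact set $C$ (or rather in a fixed compact ball containing the images). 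By Arzelà--Ascoli, any sequence $(g_n)$ with $g_n(B)\subseteq C$ has a subsequence converging uniformly on $B$; then using a diagonal argument over an exhaustion of $X$ by compact balls together with density of $D$, I would extract a subsequence converging pointwise on all of $X$ to a limit map $g$. The limit $g$ is automatically an isometric embedding (pointwise limits of isometries preserve distances), so $g\in\IE(X)$, giving sequential compactness; since $\IE(X)$ is metrizable (pointwise convergence on a separable space is metrizable), sequential compactness yields compactness.

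\textbf{Once properness of the evaluation map is established}, the remaining claims follow quickly. Local compactness of $\IE(X)$: the map $\IE(X)\to X$, $g\mapsto g(p)$ for a fixed basepoint $p$, fits into the scheme, and local compactness of $\IE(X)$ follows from Facts~\ref{ProperFacts} applied to a suitable proper map into the locally compact space $X$—concretely, the proper map $\IE(X)\times X\to X\times X$ restricted over $\{p\}\times X$ exhibits $\IE(X)$ as mapping properly into $X$ (via $g\mapsto(g(p),p)$), and the preimage of a compact neighborhood is a compact neighborhood. Second countability of $\IE(X)$ follows since it is a subspace of $X^D$ (maps determined by values on the countable dense set $D$), and a subspace of a second countable space is second countable. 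The stabilizer-type and orbit statements for $\Iso(X)$ then come from the earlier Proposition on proper actions.

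\textbf{The one genuinely subtle point}—and the reason for the footnote about the gap in \cite{GKVW}—is showing that $\Iso(X)\subseteq\IE(X)$ is \emph{closed}. A pointwise limit of surjective isometries need not obviously be surjective. Here I would argue as follows: given $g_n\in\Iso(X)$ converging pointwise to $g\in\IE(X)$, I must show $g$ is surjective. By the properness established above, the inverses $g_n^{-1}$ lie in a compact subset of $\IE(X)$ (they map any fixed compact ball into a bounded, hence relatively compact, region, using that isometries preserve the basepoint orbit structure), so after passing to a subsequence $g_n^{-1}\to h\in\IE(X)$ pointwise. Then continuity of composition (Lemma~\ref{IsoLemma}) gives $gh=hg=\id$, so $g$ is invertible and $g\in\Iso(X)$. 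This uses properness in an essential way to control the inverses, which is precisely the ingredient whose justification was missing in \cite{GKVW}. Finally, with $\Iso(X)$ closed in the locally compact, second countable $\IE(X)$, Corollary~\ref{InducedIsProper} (or the closed-subspace argument) yields that $\Iso(X)$ is itself locally compact and second countable, and the restricted action $\Iso(X)\times X\to X$ is proper by Corollary~\ref{InducedIsProper} applied to the closed subgroup $\Iso(X)\subseteq\IE(X)$ acting on the closed invariant subset $X$.
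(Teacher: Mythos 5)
Your proof is correct, and for both substantive steps you take a route that genuinely differs from the paper's. For the compactness of the sets $A_{B,C}=\{g\in\IE(X)\mid g(b)\in C\text{ for some }b\in B\}$, the paper bounds $d(gz,z)\leq 3r+2d(z,B)$ for every $z\in X$ and realizes $A_{B,C}$ inside the Tychonoff-compact product $\prod_{z\in X}\bar B_{3r+2d(z,B)}(z)$, then invokes Lemma~\ref{ProperLemmma}; you instead run Arzel\`a--Ascoli with a diagonal argument over an exhaustion by compact balls and convert sequential compactness to compactness via metrizability of $\IE(X)$, which you correctly extract from the $1$-Lipschitz property (pointwise convergence on a countable dense set already determines the topology). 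Both work; the Tychonoff route avoids sequences and the separability discussion entirely, while yours is more hands-on. For the closedness of $\Iso(X)$ --- the delicate point --- the paper takes $g\in\overline{\Iso(X)}$, observes that inverses of nearby isometries stay in the compact set $L=\{h\in\Iso(X)\mid d(p,hp)\leq r\}$, and applies Wallace's lemma to conclude $\id_X\in g\overline{L}$, producing a right inverse of $g$ in $\IE(X)$; you extract a convergent subsequence $g_n^{-1}\longrightarrow h$ from the same compact set and use joint continuity of composition (Lemma~\ref{IsoLemma}) to get $gh=hg=\id_X$. These are the same idea in sequential versus net-free form, and your version is legitimate because $\IE(X)$ is first countable. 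Two small points to tidy up: first, in the Arzel\`a--Ascoli step you should also check that the limit $g$ remains in $A_{B,C}$, i.e.\ that a witness $b\in B$ with $g(b)\in C$ survives the limit (pass to convergent subsequences $b_n\longrightarrow b$ and $g_n(b_n)\longrightarrow c$ and use $d(g_n(b),g_n(b_n))=d(b,b_n)$); second, Corollary~\ref{InducedIsProper} is stated for group actions, whereas $\IE(X)$ is only a monoid, so for the final restriction it is cleaner to cite Facts~\ref{ProperFacts} directly: $\Iso(X)\times X$ is closed in $\IE(X)\times X$, hence the restriction of the proper map $(g,x)\longmapsto(gx,x)$ to it is proper.
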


\begin{proof}
We first show that the map $F:(g,x)\longmapsto (d(gx,x),x)$ is proper. 
If $C\subseteq\RR$ and $B\subseteq X$
are compact, we claim that the set 
\[\IE(X)_{B,C}=\{g\in\IE(X)\mid \text{ there is }b\in B\text{ such that }d(gb,b)\in C\}\]
has compact closure.
We choose $r>0$ with $C\subseteq [-r,r]$, and with $\mathrm{diam}(B)\leq r$.
For $g\in \IE(X)_{B,C}$ and $b\in B$ with $d(gb,b)\in C$
and $z\in X$ we have  then 
\[d(gz,z)\leq d(gz,gb)+d(gb,b)+d(b,z)\leq r+2d(z,b)\leq 3r+2d(z,B).\]
Hence $\IE(X)_{B,C}\subseteq\prod_{z\in X}\bar B_{3r+2d(z,B)}(z)\subseteq \prod_{z\in X}X$ has compact closure.
It follows from Lemma~\ref{ProperLemmma} that $F$ is proper.
Since $F$ factors as $(g,x)\longmapsto (gx,x)\longmapsto (d(gx,x),x)$, the map
$(g,x)\longmapsto (gx,x)$ is also proper by \ref{ProperFacts}.
In particular, $\IE(X)\times X$ is locally compact and thus 
$\IE(X)$ is locally compact.

Every compact metrizable space is second countable, cp.~\cite[XI.4.1]{Dug}. 
Being proper, $X$ is therefore a union
of countably many open and second countable sets. Hence $X$ is second countable,
and therefore the compact-open topology on $\IE(X)$ is also second countable
\cite[XII.5.2]{Dug}.

Suppose that $g\in \IE(X)$ is contained in the closure of $\Iso(X)$. We fix $p\in X$ and $r>0$, such
that $d(p,gp)<r$.
The set $K=\{h\in\IE(X)\mid d(p,hp)\leq r\}$ is compact and contains $g$.
We put $L=K\cap\Iso(X)$ and we note that $g\in\overline{L}$. 
We claim that $\id_X\in g\overline{L}$.
For every neighborhood $V$ of $g$, there is an element $h\in L\cap V$.
But then also $h^{-1}\in L$ and thus $\id_X\in VL\subseteq V\overline{L}$.
Since $\overline{L}$ is compact, Wallace' Lemma \cite[XI.2.6]{Dug}, \cite[3.2.10]{Eng} shows that 
$\id_X\in g\overline{L}$.
In particular, there exists $h\in\IE(X)$ such that $gh=\id_X$. Thus $g$ is surjective,
and hence in $\Iso(X)$. We have shown that $\Iso(X)\subseteq\IE(X)$ is closed.

Since $\Iso(X)\subseteq\IE(X)$ is closed, the restriction $\Iso(X)\times X\longrightarrow X\times X$
is proper by~\ref{ProperFacts}.
\end{proof}
The following example shows that the assumption of $X$ being proper is essential in the previous theorem.
\begin{Ex}
Let $X$ be a countably infinite set, with the discrete metric
$d(x,y)=1$ whenever $x\neq y$. Then $X$ is locally compact, but not proper.
The monoid $\IE(X)$ consists of all injective maps $X\longrightarrow X$,
while the isometry group of $X$ 
consists of all permutations of $X$. For every $g\in\IE(X)$ and every finite
subset $E\subseteq X$, there exists a permutation $h$ of $X$ that agrees with 
$g$ on $E$. Thus $\Iso(X)$ is dense in $\IE(X)$.
Neither $\IE(X)$ nor $\Iso(X)$ is locally compact.
\footnote{Both spaces are Polish. 
The monoid $\IE(X)$ is the left Weil completion of $\Iso(X)$,
while the group $\Iso(X)$ is Raikov complete.}
\end{Ex}
Example \ref{RonT} shows that a group can act isometrically and faithfully on a proper metric space
without acting properly. However, every group that acts faithfully, isometrically and properly
on a proper metric space carries necessarily the topology of pointwise convergence,
as we show now.
\begin{Prop}\label{TopologyofG}
Let $(X,d)$ be a proper metric space and let $G\times X\longrightarrow X$ be a proper
and isometric action of a topological group $G$.
Then the associated homomorphism $G\longrightarrow\Iso(X)$ is continuous and proper.
In particular, $G$ is locally compact.
If the kernel of the action is trivial, then $G$ carries the topology of pointwise
convergence.
\end{Prop}
\begin{proof}
For every $z\in X$, the evaluation map $G\longrightarrow X$ that maps 
$g$ to $gz$ is continuous. Hence the homomorphism $G\longrightarrow\Iso(X)$
is continuous with respect to the topology of pointwise convergence on $\Iso(X)$.
The first claim follows now from Lemma~\ref{ProperActionEquivariant}, with 
$K=\Iso(X)$ and $f=\id_X$.
The second claim follows from the fact that an injective closed map is a topological embedding.
\end{proof}
We recall that a group $G$ acts \emph{properly discontinuously} on a space $X$
if the action is proper with respect to the discrete topology on $G$.
\footnote{Again, different authors define properly discontinuous actions in rather different 
ways. Kapovich's article \cite{Kapo} is an excellent reference for a comparison of
differing definitions.
Some authors require in addition that the action is free.}
\begin{Cor}
Suppose that a group $G$ acts faithfully and isometrically on a
proper metric space $(X,d)$. Then the following are equivalent.
\begin{enumerate}[\rm(i),nosep]
 \item The action is properly discontinuous.
 \item The group $G$ is discrete in the topology of pointwise convergence.
 \item The group $G$ is discrete in the compact-open topology.
\end{enumerate}
\end{Cor}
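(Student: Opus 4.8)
The plan is to work through the injective homomorphism $\alpha:\Gamma\longrightarrow\Iso(X)$ afforded by the faithful isometric action, viewing $\Gamma$ as the subgroup $\alpha(\Gamma)\subseteq\Iso(X)$, and to feed the properness machinery of the previous pages into the three implications. First I would dispose of the equivalence of (ii) and (iii) for free: by Lemma~\ref{IsoLemma} the compact-open topology and the topology of pointwise convergence agree on $\IE(X)$, hence their restrictions to the subset $\alpha(\Gamma)\subseteq\Iso(X)\subseteq\IE(X)$ agree as well, so $\alpha(\Gamma)$ is discrete in one precisely when it is discrete in the other. It then remains to relate (i) to, say, (ii).

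For (i) $\Rightarrow$ (ii) I would equip $\Gamma$ with the discrete topology, so that ``properly discontinuous'' means exactly that the isometric action of the topological group $\Gamma$ is proper. The preceding Proposition then applies and yields that $\alpha$ is a proper, hence closed, map. Since $\Gamma$ is discrete, every subset of $\Gamma$ is closed, so $\alpha$ sends every subset of $\Gamma$ to a closed subset of $\Iso(X)$. Applying this to the complement of a singleton, and using injectivity of $\alpha$, the set $\alpha(\Gamma-\{g\})=\alpha(\Gamma)-\{\alpha(g)\}$ is closed in $\Iso(X)$ and a fortiori in $\alpha(\Gamma)$; thus each point of $\alpha(\Gamma)$ is open in $\alpha(\Gamma)$, i.e.\ $\alpha(\Gamma)$ is discrete, which is (ii).

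For (ii) $\Rightarrow$ (i) I would start from the discreteness of $\alpha(\Gamma)$ in the topological group $\Iso(X)$, which is Hausdorff by Lemma~\ref{IsoLemma} together with the blanket Hausdorff convention. A discrete subgroup of a Hausdorff topological group is closed: choosing a neighbourhood $U$ of the identity with $U\cap\alpha(\Gamma)=\{e\}$ and a symmetric neighbourhood $V$ with $VV\subseteq U$, one checks that each translate $gV$ meets $\alpha(\Gamma)$ in at most one point, which forces every limit point of $\alpha(\Gamma)$ to lie in $\alpha(\Gamma)$. Hence $\alpha(\Gamma)$ is a closed subgroup of $\Iso(X)$. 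By Theorem~\ref{IsoPropThm} the group $\Iso(X)$ acts properly on $X$, so Corollary~\ref{InducedIsProper}, applied to the closed subgroup $\alpha(\Gamma)$ and the (trivially invariant) closed subset $X\subseteq X$, shows that $\alpha(\Gamma)$ acts properly on $X$. As the subspace topology on $\alpha(\Gamma)$ is discrete, transporting this proper action back along the isomorphism $\alpha$ of discrete groups shows that $\Gamma$ acts properly with respect to the discrete topology, which is (i).

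The hard part will be the direction (ii) $\Rightarrow$ (i), and its crux is twofold: identifying Corollary~\ref{InducedIsProper} (fed by the proper action of the full group $\Iso(X)$ from Theorem~\ref{IsoPropThm}) as the right tool, and verifying that the discrete image $\alpha(\Gamma)$ is genuinely a \emph{closed} subgroup, so that the Corollary is applicable. Once this closedness is established, the remaining work is the routine bookkeeping of matching the discrete topology on $\Gamma$ with the subspace topology on $\alpha(\Gamma)$ and invoking Lemma~\ref{IsoLemma} for the passage between the two candidate topologies.
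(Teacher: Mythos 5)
Your proof is correct and assembles exactly the ingredients the paper intends (the corollary is stated without proof there): Lemma~\ref{IsoLemma} for (ii)$\Leftrightarrow$(iii), the preceding Proposition applied to $\Gamma$ with the discrete topology for (i)$\Rightarrow$(ii), and Theorem~\ref{IsoPropThm} with Corollary~\ref{InducedIsProper} plus the standard closedness of discrete subgroups of Hausdorff groups for (ii)$\Rightarrow$(i). No gaps; the argument matches the paper's intended route.
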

We note that condition (ii) may be stated as follows: there are points $p_1,\ldots,p_n\in X$
and $\eps>0$ such that every $g\in G$ with $d(gp_j,p_j)<\eps$, for $j=1,\ldots,n$,
is necessarily the identity element.

\section{Metric simplicial complexes}

\begin{Def}
Let $V$ be a set. A \emph{simplicial complex} $\Delta$ with vertex set $V$
is a set of finite subsets of $V$ that satisfies the following axioms.
\begin{enumerate}[\rm(i),nosep]
\item If $a\subseteq b\in\Delta$, then $a\in\Delta$.
 \item $\bigcup\Delta =V$.
\end{enumerate}
The \emph{geometric realization} $|\Delta|$ of $\Delta$ is defined as follows, cp.~\cite[3.1.14]{Spanier}.
In the real vector space $\RR^{(V)}$ spanned by the basis $V$, we let $|a|$ denote the convex hull of 
the simplex $a\in\Delta$.
The geometric realization of $\Delta$ is the set $|\Delta|=\bigcup\{|a|\mid a\in\Delta\}$.
The subsets $|a|\subseteq|\Delta|$ will also be called simplices.
The \emph{weak topology} on $|\Delta|$ is defined as follows.
Each simplex $|a|$ carries its natural compact topology as a subset of $\RR^a$,
and a subset $A\subseteq |\Delta|$ is closed if $A\cap|a|$ is closed in $|a|$ for all $a\in\Delta$.
\end{Def}
\begin{Lem}\label{Vcloseddiscreteweak}
Let $\Delta$ be a simplicial complex. Then the vertex set $V\subseteq|\Delta|$
is closed and discrete in the weak topology.
\end{Lem}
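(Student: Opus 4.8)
The plan is to prove the single stronger statement that \emph{every} subset $W\subseteq V$ is closed in $|\Delta|$ for the weak topology. This one fact delivers both conclusions at once: taking $W=V$ shows that $V$ is closed, while for a fixed vertex $v$ the set $V\setminus\{v\}$ is a subset of $V$ and hence closed, so $\{v\}$ is open in the subspace $V$; as $v$ was arbitrary, $V$ carries the discrete topology. By the very definition of the weak topology, to show that $W$ is closed it suffices to verify that $W\cap|a|$ is closed in $|a|$ for every simplex $a\in\Delta$.

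So I would fix $a\in\Delta$ and identify $W\cap|a|$ explicitly. Recall that the vertices $v\in V$ are the basis vectors $e_v$ of $\RR^{(V)}$, and that $|a|$ is the convex hull of the finitely many vectors $\{e_w\mid w\in a\}$. The key observation is that, because distinct basis vectors are linearly independent, a basis vector $e_v$ lies in $|a|$ if and only if $v\in a$: if $v\notin a$, then $e_v$ does not even belong to the linear span of $\{e_w\mid w\in a\}$, let alone to its convex hull. Consequently $W\cap|a|=\{e_w\mid w\in W\cap a\}$, and since $a$ is a finite set this intersection is finite.

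To finish, note that $|a|$ is a compact metric space (a subset of $\RR^a$) and therefore Hausdorff, so the finite set $W\cap|a|$ is closed in $|a|$. As this holds for every $a\in\Delta$, the set $W$ is closed in the weak topology, which is what was needed.

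There is no real obstacle here; the one point deserving care is the identification $W\cap|a|=\{e_w\mid w\in W\cap a\}$, which is exactly where the linear independence of the basis $V$ of $\RR^{(V)}$ is used, and which is what forces every such intersection to be finite rather than merely closed.
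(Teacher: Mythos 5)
Your proof is correct and rests on the same underlying fact as the paper's, namely that a simplex $|a|$ contains exactly the vertices $e_w$ with $w\in a$, so that any subset of $V$ meets each $|a|$ in a finite (hence closed) set. The only cosmetic difference is that you deduce discreteness by applying this to $V\setminus\{v\}$, whereas the paper exhibits the explicit open neighborhood $|\Delta|\setminus\bigcup\{|a|\mid a\in\Delta,\ v\notin a\}$ of $v$; both amount to the same observation.
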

\begin{proof}
The vertex set is closed from the definition of the weak topology.
If $v$ is a vertex, then \[U=|\Delta|\setminus\bigcup\{|a|\mid a\in\Delta\text{ and }v\not\in a\}.
\]
is an open neighborhood of $v$ which contains no vertex besides $v$.
\end{proof}
Suppose that $\Delta'$ is another simplicial complex, with vertex set $V'$.
A map ${\phi:V\longrightarrow V'}$ is called \emph{simplicial} if it maps simplices to simplices.
Then $\phi$ extends to a map ${\phi:\Delta\longrightarrow\Delta'}$, and to a linear map $\phi:|\Delta|\longrightarrow|\Delta'|$
which is continuous with respect to the weak topologies.
\begin{Lem}\label{coeqpw}
Let $\Delta$ and $\Delta'$ be simplicial complexes on vertex sets $V,V'$. Then the following topologies
on the set $S(\Delta,\Delta')$ of all simplicial maps from $\Delta$ to $\Delta'$ coincide.
\begin{enumerate}[\rm(i),nosep]
 \item The compact-open topology, for the weak topologies on $|\Delta|$ and $|\Delta'|$.
 \item The topology of pointwise convergence, as maps from $|\Delta|$ to $|\Delta'|$, for any topology on $|\Delta'|$
 in which the vertex set $V'\subseteq|\Delta'|$ is discrete.
 \item The topology of pointwise convergence, as maps from $V$ to the discrete space $V'$.
\end{enumerate}
\end{Lem}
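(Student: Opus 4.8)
The plan is to prove the cycle of inclusions by comparing subbases, viewing $S(\Delta,\Delta')$ as a space of functions. For $A\subseteq|\Delta|$ and $U\subseteq|\Delta'|$ write $\langle A;U\rangle=\{\phi\in S(\Delta,\Delta')\mid\phi(A)\subseteq U\}$. I would first settle the equivalence of (ii) and (iii) for an \emph{arbitrary} topology on $|\Delta'|$ in which $V'$ is discrete. Two elementary observations drive everything: a simplicial map sends each vertex to a vertex, so $\phi(v)\in V'$ always; and every point $x\in|\Delta|$ lies in a unique minimal simplex, so $x=\sum_i t_iv_i$ is a convex combination of finitely many vertices $v_1,\dots,v_k$, whence $\phi(x)=\sum_i t_i\phi(v_i)$ is determined by the finitely many values $\phi(v_1),\dots,\phi(v_k)$.

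For (ii)$\subseteq$(iii) I would take a subbasic set $\langle x;U\rangle$ with $U$ open. If $\phi(x)\in U$ and $v_1,\dots,v_k$ span the support of $x$, then every $\psi$ agreeing with $\phi$ on $v_1,\dots,v_k$ has $\psi(x)=\phi(x)\in U$, so the (iii)-basic set cut out by these finitely many vertex conditions lies in $\langle x;U\rangle$. For (iii)$\subseteq$(ii) I would note that a (iii)-subbasic set is $\{\phi\mid\phi(v)=w\}$ for a vertex $v$ and $w\in V'$; choosing open $U_w\subseteq|\Delta'|$ with $U_w\cap V'=\{w\}$ (possible since $V'$ is discrete) and using $\phi(v)\in V'$, this set equals $\langle v;U_w\rangle$, which is (ii)-open. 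Hence (ii) and (iii) coincide, independently of the chosen topology on $|\Delta'|$. By Lemma~\ref{Vcloseddiscreteweak} the weak topology is admissible in (ii), so pointwise convergence computed with the weak topology equals (iii); and since points are compact, this pointwise topology is coarser than the compact-open topology (i). It therefore remains only to prove that (i) is coarser than (iii), i.e. that each $\langle K;U\rangle$ with $K\subseteq|\Delta|$ compact and $U\subseteq|\Delta'|$ weakly open is (iii)-open.

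This last step is the crux, and the one nontrivial ingredient. The essential input is the standard fact that a compact $K\subseteq|\Delta|$ in the weak topology meets only finitely many open simplices, and so lies in a finite union $L$ of closed simplices spanned by a finite vertex set $F$; I would prove it by the usual argument that a set choosing one point from infinitely many distinct open simplices is closed and discrete (it meets each closed simplex in a finite set, a simplex having only finitely many faces), contradicting compactness of $K$. Granting this, fix $\phi\in\langle K;U\rangle$. Any simplicial $\psi$ agreeing with $\phi$ on $F$ agrees with $\phi$ on all of $L$, since every point of $L$ is a convex combination of vertices in $F$ and simplicial maps act linearly in barycentric coordinates; in particular $\psi(K)=\phi(K)\subseteq U$. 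Thus the (iii)-basic neighbourhood $\bigcap_{v\in F}\{\psi\mid\psi(v)=\phi(v)\}$ of $\phi$ lies in $\langle K;U\rangle$, so $\langle K;U\rangle$ is (iii)-open. Combining the inclusions yields (iii)$=$(ii)$\subseteq$(i)$\subseteq$(iii), so all three topologies agree. I expect the finite-subcomplex fact to be the only real obstacle; everything else is formal manipulation of subbases, relying repeatedly on the fact that a simplicial map is determined on each simplex by its values on that simplex's finitely many vertices.
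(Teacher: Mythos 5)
Your proposal is correct and follows essentially the same route as the paper: first the equivalence of (ii) and (iii) via discreteness of $V'$ and the fact that a simplicial map is determined on a simplex by its vertex values, then the inclusion of (i) into (iii) via the fact that a weakly compact subset of $|\Delta|$ lies in a finite subcomplex (which the paper simply cites from Spanier, while you sketch the standard proof), and finally closing the cycle because pointwise convergence is coarser than the compact-open topology.
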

Case (ii) in the lemma applies in particular to the weak topology on $|\Delta'|$
by Lemma~\ref{Vcloseddiscreteweak}.
\begin{proof}
We first show that the topologies in (ii) and (iii) on $S(\Delta,\Delta')$ are equal.
If $\phi,\psi\in S(\Delta,\Delta')$ agree on a finite subset $V_0\subseteq V$,
then they agree on the finite subcomplex $\Delta_0$ spanned by the vertex set $V_0$.
Hence the topology in (ii) is coarser than the topology in (iii).
On the other hand, if $\phi\in S(\Delta,\Delta')$ and if $V_0\subseteq V$ is finite,
then we can choose (in the topology given on $|\Delta'|$) 
for every $v\in V_0$ a neighborhood $U_v$ of $\phi(v)$ that contains
no vertex besides $\phi(v)$. Hence if $\psi\in S(\Delta,\Delta')$ and if $\psi(v)\in U_v$ holds for all $v\in V_0$,
then $\phi$ and $\psi$ agree on $V_0$. This shows that the topology in (iii) is coarser than
the topology in (ii).

Now we compare the topologies in (i) and (iii).
Suppose that $A\subseteq |\Delta|$ is compact in the weak topology, 
that $U\subseteq|\Delta'|$ is open in the weak topology,
and that $\phi\in S(\Delta,\Delta')$ is a simplicial map with
$\phi(A)\subseteq U$. Since $A$ is compact, there is  a finite subcomplex
$\Delta_0\subseteq\Delta$ with $A\subseteq|\Delta_0|$, cp.~\cite[Ch.~3.1.19]{Spanier}.
Let $V_0$ denote the (finite) vertex set of $\Delta_0$.
If $\psi$ is a simplicial map which agrees with $\phi$ on $V_0$, then 
$\psi|_{\Delta_0}=\phi|_{\Delta_0}$. In particular,
$\psi(A)=\phi(A)\subseteq U$. 
This shows that the compact-open topology in (i) is coarser than the topology in (iii).

The topology in (iii) agrees with the topology of pointwise convergence in (ii), for the
weak topology on $|\Delta'|$. Since the topology of pointwise convergence is always
coarser than the compact-open topology, all three topologies are equal.
\end{proof}
Later we will be interested in the joint continuity of the evaluation map 
\[
 S(\Delta,\Delta')\times|\Delta|\longrightarrow|\Delta'|.
\]
This requires some preparations.
\begin{Def}\label{LocallyFiniteTopology}
We call a topology on the geometric realization $|\Delta|$ of a simplicial complex $\Delta$ \emph{locally finite}
if every $x\in|\Delta|$ has some neighborhood $N_x$ which is contained in the geometric
realization of a finite subcomplex $\Delta_x\subseteq\Delta$, i.e.
$N_x\subseteq|\Delta_x|$. 
In this case, every compact subset $C\subseteq|\Delta|$ is contained in the geometric realization of
a finite subcomplex of $\Delta$.
\end{Def}
For example, the discrete topology on $|\Delta|$ is locally
finite. If the simplicial complex $\Delta$ is locally finite
(every vertex is contained in only finitely many simplices), then the weak topology
on $|\Delta|$ is locally finite.
\begin{Prop}\label{Eval}
Let $\Delta$ and $\Delta'$ be simplicial complexes with topologies on
$|\Delta|$ and $|\Delta'|$, respectively.
Let $M\subseteq S(\Delta,\Delta')$ be a set of simplicial maps which are continuous with respect to these
topologies. Then we have the following.
\begin{enumerate}[\rm(i),nosep] 
 \item 
If the topology on $|\Delta|$ is locally finite, then the joint evaluation map 
\[
 M\times|\Delta|\longrightarrow|\Delta'|
\]
is continuous, where $M$ carries the topology of pointwise convergence on vertices.
\item
If the topology on $|\Delta|$ is locally finite and if $V'\subseteq |\Delta'|$ is discrete,
then the topology of pointwise convergence on vertices, the topology of pointwise convergence on $|\Delta'|$
and the compact-open topology as maps from $|\Delta|$ to $|\Delta'|$ coincide on $M$.
\end{enumerate}
\end{Prop}
\begin{proof}
First we show claim (i).
Let $(\phi,x)\in M\times|\Delta|$ and put $z=\phi(x)$. Let $W$ be any neighborhood of $z$.
From the continuity of $\phi$ and from our assumptions, there is a neighborhood $U$ of $x$ with $\phi(U)\subseteq W$, such that
$U$ is contained in the geometric realization of a finite subcomplex $\Delta_x$ of $\Delta$.
Let $V_x$ denote the vertex set of $\Delta_x$. Then the set $M_x$ consisting of all maps in $M$
which agree with $\phi$ on $V_x$ is an open neighborhood of $f$ in $M$.
For $(\psi,y)\in M_x\times U$ we have thus $\psi(y)=\phi(y)\in W$.

For claim (ii) we note that by Lemma~\ref{coeqpw} the topology of pointwise convergence on vertices
coincides with the topology of pointwise convergence on $|\Delta'|$. 
The topology of pointwise convergence is
coarser than the compact-open topology. Suppose that $A\subseteq|\Delta|$ is compact. Then $A$ is contained in
the geometric realization of a finite subcomplex $\Delta_0$ of $\Delta$ and we may argue as in the proof of Lemma~\ref{coeqpw}. 
If $\phi\in M$ maps $A$ into the open set 
$W\subseteq|\Delta'|$, then every $\psi\in M$ which agrees with $\phi$ on the finite vertex set $V_0$ of $\Delta_0$
maps $A$ into $W$. Hence the compact-open topology is coarser than the topology of pointwise convergence.
\end{proof}
\begin{Prop}\label{ProperOnVertices}
Suppose that a topological group $G$ acts on a simplicial complex $\Delta$, with vertex set $V$. Suppose also that 
$|\Delta|$ carries a topology which is $G$-invariant, locally finite, such that the vertex set $V\subseteq|\Delta|$ closed and discrete,
and such that $|\Delta|\times|\Delta|$ is a $k$-space.
Then the following are equivalent.
\begin{enumerate}[\rm(i),nosep]
 \item The action of $G$ on $V$ is continuous and proper.
 \item The action of $G$ on $|\Delta|$ is continuous and proper.
\end{enumerate}
If these conditions hold, then $G$ is locally compact.
\end{Prop}
\begin{proof}
If (ii) holds, then (i) follows at once from Corollary~\ref{InducedIsProper}, and $G$ is locally compact because the discrete set $V$ is locally compact.

Suppose that the action of $G$ on $V$ is continuous and proper. Let $N$ denote the compact kernel of this action, and put $H=G/N$.
Then $H$ acts properly on $V$, and it suffices to show that the action of $H$ on $|\Delta|$ is proper.
The action of $H$ on $|\Delta|$ is continuous by Proposition~\ref{Eval}(i), since
$H$ carries by Proposition~\ref{TopologyofG} the topology of pointwise convergence on vertices.
Let $A,B\subseteq|\Delta|$ be compact sets. In view of Proposition~\ref{ProperActionProperties}
we have to show that the set $G_{A,B}\subseteq G$
is compact. There are finite subcomplexes $\Delta_0,\Delta_1\subseteq\Delta$, with vertex sets $V_0,V_1$, such that 
$A\times B\subseteq|\Delta_0|\times|\Delta_1|$.
Then $G_{A,B}\subseteq\bigcup\{G_{\{x\},\{y\}}\mid (x,y)\in V_0\times V_1\}$ has compact closure.
\end{proof}
The following example shows that one needs in general assumptions like local finiteness
on the topology $|\Delta|$.
\begin{Ex}
We consider the group $G=\QQ/\ZZ$ with the discrete topology. It acts in a natural way freely on the unit circle $V=\bS^1$. 
We may view $V$ as a $0$-dimensional simplicial complex $\Delta$, with geometric realization 
$|\Delta|=V$.
The discrete topology on $|\Delta|$ (which coincides with the weak topology)
is locally finite and the action of $G$ is proper, cp.~Lemma~\ref{ActionOnDiscreteSet}.
But if we put the usual compact topology of $\bS^1$ on $|\Delta|$, then the action of $G$ fails to be proper,
since $G=G_{V,V}$ is not compact. Indeed, the compact topology on $|\Delta|$
is not locally finite.
\end{Ex}

Now we turn to metrics on simplicial complexes.
The following is a basic example of a piecewise linear locally euclidean metrizable simplicial complex
which fails to be proper.
\begin{Ex}\label{NotProper}
Put
\[V=\{{\textstyle\frac{1}{n}}\mid\text{ for }n=1,2,3,4,\ldots\}\subseteq (0,1]\]
and let $\Delta$ be the $1$-dimensional simplicial complex whose $1$-simplices are the sets 
$\{\frac{1}{n+1},\frac{1}{n}\}$, for $n=1,2,3,4,\ldots$.
Then $|\Delta|$ is in the weak topology homeomorphic to the half-open interval $(0,1]$.
For the standard euclidean metric, $|\Delta|$ is not complete and therefore not a proper
metric space, even though $\Delta$ is a locally finite simplicial complex, and the topology on $\Delta$ is locally finite.
\end{Ex}
We recall the definition of an \emph{$M_\kappa$-simplicial complex} \cite[I.7]{BH}.
\begin{Def}
Suppose we are given a simplicial complex $\Delta$ and a real number $\kappa$. 
A \emph{geodesic simplex of constant sectional curvature $\kappa$} is the convex hull of a finite set of points
in general position in the simply connected Riemannian manifold $M_\kappa$ of constant sectional
curvature $\kappa$ \cite[I.7.1]{BH}.
An \emph{$M_\kappa$-structure} on $\Delta$ consists of a collection $\cS$ of geodesic simplices of constant sectional curvature $\kappa$.
For each $a\in\Delta$, there is a geodesic simplex $s_a\in\cS$
and an affine bijection \[\sigma_a:|a|\longrightarrow s_a\] in the sense of \cite[I.7A.7]{BH}.
These bijections are subject to the usual compatibility condition:
if $b\subseteq a$, then $\sigma_a\circ \sigma_b^{-1}:s_b\longrightarrow s_a$ maps 
$s_b$ isometrically onto a face of $s_a$ \cite[I.7A.9]{BH}.
In this way, every simplex $|a|$ carries a well-defined metric $d_{a}$ of constant sectional curvature $\kappa$.
The associated \emph{intrinsic pseudometric} on $|\Delta|$ is defined as follows.
An \emph{$m$-string} $x=(x_0,\ldots,x_m)$ is a finite sequence in $|\Delta|$, such that consecutive points
$x_{i-1},x_{i}$ are contained in a common simplex $|a_i|$.
The \emph{length} of the string is \[\ell(x)=\sum_{i=1}^m d_{a_i}(x_{i-1},x_i).\]
The \emph{intrinsic pseudo-metric distance} $d(x,y)$ between $x,y\in|\Delta|$
is then the infimum of the lengths of all strings joining $x$ and $y$.
If $\cS$ is finite, one says that the $M_\kappa$-structure
on $\Delta$ has \emph{finitely many shapes}. In this case
$(|\Delta|,d)$ is a complete geodesic metric space \cite[I.7.19]{BH}.
\end{Def}
The simplicial complex in Example~\ref{NotProper} is locally finite, but not with finitely many shapes.
\begin{Lem}\label{Vcloseddiscreted}
Let $\Delta$ be a simplicial complex with an $M_\kappa$-structure with finitely many shapes.
Then the intrinsic distance between distinct vertices is bounded away from $0$,
and hence $V$ is closed and discrete in the metric space $|\Delta|$.
\end{Lem}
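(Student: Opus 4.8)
The plan is to produce a single constant $\eps>0$ such that $d(v,w)\ge\eps$ for any two distinct vertices $v,w\in V$; the two topological conclusions are then formal consequences. Indeed, once such an $\eps$ is available, the open ball $B_{\eps/2}(v)$ meets $V$ only in $v$, so every singleton is open in the subspace $V$ and $V$ is discrete; and if a point $x$ were the limit of a nonconstant sequence in $V$, then two of its terms would lie within $\eps/2$ of $x$ and hence within $\eps$ of each other, which is impossible, so $V$ contains all of its limit points and is closed. Thus everything reduces to the uniform separation estimate.

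To obtain the separation constant I would use barycentric coordinates. For a vertex $v$ let $\lambda_v\colon|\Delta|\longrightarrow[0,1]$ be the $v$-th barycentric coordinate function: on a simplex $|a|$ with $v\in a$ it is the usual barycentric coordinate, and it is identically $0$ on every simplex $|a|$ with $v\notin a$. This is globally well defined (its value at a point depends only on the carrier of that point), with $\lambda_v(v)=1$ and $\lambda_v(w)=0$ for every vertex $w\neq v$. The crucial point, and the place where the hypothesis of finitely many shapes is used, is that these functions are \emph{uniformly} Lipschitz. On a single shape, the barycentric coordinate $\lambda_v|_{|a|}$ corresponds via the affine bijection $\sigma_a$ to a smooth function on the compact Riemannian simplex $s_a$, and a smooth function on a compact Riemannian manifold is Lipschitz; hence $\lambda_v|_{|a|}$ is Lipschitz for the metric $d_a$. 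Taking the maximum of these Lipschitz constants over the finitely many shapes and their (finitely many) vertices yields one constant $C$ such that, for every vertex $v$ and every simplex $|a|$, the restriction $\lambda_v|_{|a|}$ is $C$-Lipschitz for $d_a$.

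Finally I would run a telescoping estimate along strings. Let $x=(x_0,\dots,x_m)$ be any string joining $v=x_0$ to $w=x_m$, so that consecutive points $x_{i-1},x_i$ lie in a common simplex $|a_i|$. The Lipschitz bound on $|a_i|$ gives $|\lambda_v(x_{i-1})-\lambda_v(x_i)|\le C\,d_{a_i}(x_{i-1},x_i)$, and summing over $i$ yields
\[
1=|\lambda_v(v)-\lambda_v(w)|\le C\sum_{i=1}^m d_{a_i}(x_{i-1},x_i)=C\,\ell(x).
\]
Taking the infimum over all strings joining $v$ and $w$ gives $d(v,w)\ge 1/C$, so $\eps=1/C$ works, and the reduction in the first paragraph finishes the proof. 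The main obstacle is precisely the uniform Lipschitz estimate of the middle step: one must check that finiteness of shapes is genuinely what upgrades the (automatic) Lipschitz continuity on each individual simplex into a single constant valid simultaneously for all vertices and all simplices. Everything before and after that step is formal.
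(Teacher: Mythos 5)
Your proof is correct, but it follows a genuinely different route from the paper. The paper's proof is a two-line appeal to Bridson--Haefliger: it introduces the quantity $\eps(v)=\inf\{\eps(v,a)\mid v\in a\}$ of \cite[Def.~I.7.8]{BH}, observes that finiteness of $\cS$ bounds $\eps(v)$ away from $0$ uniformly in $v$ (since $\eps(v,a)$ depends only on the shape $s_a$ and on which vertex of $s_a$ corresponds to $v$), and then cites \cite[Lem.~I.7.9]{BH}, which forces any vertex $w$ with $d(v,w)<\eps(v)$ to equal $v$. You instead build a uniformly Lipschitz separating function (the barycentric coordinate $\lambda_v$, which is globally well defined on $|\Delta|$ because its value depends only on the carrier of a point) and telescope the Lipschitz estimate along an arbitrary string, getting the explicit bound $d(v,w)\geq 1/C$. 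Both arguments use the finiteness of shapes at exactly the same spot, namely to upgrade a per-simplex constant to a uniform one over all simplices and vertices; your version is self-contained where the paper outsources the local structure theory to \cite{BH}, and it yields an explicit separation constant. The one step you should make fully precise is the Lipschitz continuity of the barycentric coordinates on a geodesic simplex $s_a\subseteq M_\kappa$ for $\kappa\neq 0$: this follows because the affine structure of \cite[I.7A]{BH} is transported from a linear model (Klein model, resp.\ gnomonic projection) by a diffeomorphism that is smooth on a neighborhood of the compact simplex, so the coordinates are smooth, hence Lipschitz on $s_a$ with a constant depending only on the isometry class of $(s_a,\sigma_a(v))$; you correctly identify this as the crux, and the claim is true. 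Your reduction of closedness and discreteness of $V$ to the uniform separation estimate is the same as what the paper implicitly uses.
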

\begin{proof}
For a vertex $v$ we put as in \cite[Def.~I.7.8]{BH} 
\[\eps(v)=\inf\{\eps(v,a)\mid v\in a\},\]
where
\[\eps(v,a)=\inf\{d_a(v,|b|)\mid b\subseteq a \text{ and }v\not\in b\}.\]
Since $\cS$ is finite, $\eps(v)$ is bounded away from $0$ as $v$ varies in $V$.
Now \cite[Lem.~I.7.9]{BH} says that if $w$ is another vertex with 
$d(v,w)<\eps(v)$, then $v=w$.
\end{proof}
We recall that a simplicial complex is locally finite if no vertex is contained in
infinitely many simplices. The following result appears to be folklore, but we could not find
a reference.
\begin{Thm}\label{MkappaProper}
Let $\Delta$ be a simplicial complex with an $M_\kappa$-structure with
finitely many shapes. Then the following are equivalent.
\begin{enumerate}[\rm(i),nosep]
 \item $\Delta$ is locally finite.
 \item $|\Delta|$ is a proper metric space with respect to the intrinsic metric. 
\end{enumerate}
If $\Delta$ is locally finite, then the weak topology and the metric topology on $|\Delta|$ coincide.
In particular, the topology is locally finite in the sense of Definition~\ref{LocallyFiniteTopology}.
\end{Thm}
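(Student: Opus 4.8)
The plan is to prove the two implications separately and to obtain the statement about the topologies as a by-product of the argument for (i)$\Rightarrow$(ii). For (ii)$\Rightarrow$(i) I would argue by contraposition. Suppose $\Delta$ is not locally finite, so some vertex $v$ lies in infinitely many simplices. Since every simplex containing $v$ is spanned by $v$ together with vertices joined to $v$ by an edge, a vertex lying in only finitely many edges would lie in only finitely many simplices; hence $v$ is an endpoint of infinitely many edges, with pairwise distinct opposite endpoints $w_1,w_2,\dots$. Because there are only finitely many shapes, the diameters of the $1$-simplices are bounded by some $D$, so $d(v,w_i)\le D$ for all $i$ and the set $W=\{w_1,w_2,\dots\}$ is bounded. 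On the other hand, Lemma~\ref{Vcloseddiscreted} supplies a $\delta>0$ with $d(w_i,w_j)\ge\delta$ for $i\ne j$, so $W$ is $\delta$-separated, hence closed and discrete. Thus $W$ is an infinite bounded closed set that is not compact, the Heine--Borel property fails, and $|\Delta|$ is not proper.

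For (i)$\Rightarrow$(ii) the strategy is to verify local compactness and then invoke the Hopf--Rinow theorem \cite[I.3.7]{BH}: since $(|\Delta|,d)$ is complete and geodesic by \cite[I.7.19]{BH}, once it is also locally compact it is proper. The heart of the matter is the local claim that for every $x\in|\Delta|$ with carrier $a$ there is $\eps>0$ with $B_\eps(x)\subseteq\mathrm{St}(a)$, where $\mathrm{St}(a)=\bigcup\{|b|\mid a\subseteq b\in\Delta\}$. To prove it I would use barycentric coordinates: for a vertex $v$ let $g_v:|\Delta|\longrightarrow[0,1]$ be the function assigning to $y$ its $v$-coordinate when $y$ lies in a simplex containing $v$ and $0$ otherwise. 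On each simplex $g_v$ is affine, and since there are only finitely many shapes these restrictions are Lipschitz with a common constant $C$; evaluating along a string then shows that $g_v$ is $C$-Lipschitz for the intrinsic metric $d$. Now if $z$ lies in a simplex $c$ with $a\not\subseteq c$, choose a vertex $v\in a$ with $v\notin c$; then $g_v(x)=t_v(x)>0$ while $g_v(z)=0$, so $d(x,z)\ge t_v(x)/C$. Taking $\eps=\bigl(\min_{v\in a}t_v(x)\bigr)/C>0$ shows that $B_\eps(x)$ meets only simplices containing $a$, hence $B_\eps(x)\subseteq\mathrm{St}(a)$. By local finiteness $\mathrm{St}(a)$ is a finite subcomplex, hence compact (a finite union of simplices, each compact in $d$ because the identity $(|b|,d_b)\longrightarrow(|\Delta|,d)$ is continuous), so $\overline{B_{\eps/2}(x)}$ is compact and $|\Delta|$ is locally compact.

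For the coincidence of topologies, note first that the metric topology is always coarser than the weak topology: the identity $(|\Delta|,\text{weak})\longrightarrow(|\Delta|,d)$ is continuous, since its restriction to each simplex $|a|$ is continuous (as $d\le d_a$ there) and the weak topology is the final topology of the simplex inclusions. For the reverse inclusion under local finiteness I would reuse the ball $B_\eps(x)\subseteq\mathrm{St}(a)$ from the local claim. On the finite subcomplex $\mathrm{St}(a)$ the weak topology is compact and the metric topology is Hausdorff, so the continuous bijection between them is a homeomorphism and the two topologies agree there; given a weak-open $U\ni x$, the set $U\cap\mathrm{St}(a)$ is then metric-open in $\mathrm{St}(a)$ and contains some $B_r(x)\cap\mathrm{St}(a)$, and intersecting with $B_\eps(x)$ produces a metric ball about $x$ inside $U$. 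Hence every weak-open set is metric-open and the two topologies coincide. I expect the main obstacle to be precisely the local claim of the second paragraph, namely bounding the intrinsic metric from below near a point in terms of its simplicial position: this is exactly where the finitely-many-shapes hypothesis enters (through the uniform Lipschitz constant $C$) and where local finiteness turns the star into a compact set; the remaining steps are then formal.
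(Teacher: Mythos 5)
Your proof is correct, but the main implication is handled by a genuinely different route than the paper's. For (i)$\Rightarrow$(ii) the paper stays entirely inside the Bridson--Haefliger string machinery: it quotes \cite[I.7.20, I.7.24, I.7.28]{BH} to bound the number of simplices a taut string of bounded length can traverse, concludes that every closed ball $\bar B_r(x)$ lies in a \emph{finite subcomplex}, and reads off both properness and (by the same containment plus a properness-of-the-identity argument) the coincidence of the weak and metric topologies. You instead prove the local statement $B_\eps(x)\subseteq\mathrm{St}(a)$ via uniformly Lipschitz barycentric coordinate functions --- this is essentially a reproof of \cite[I.7.8--I.7.10]{BH}, whose vertex case the paper already invokes in Lemma~\ref{Vcloseddiscreted}, so you could equally well just cite it --- and then get properness from completeness, local compactness and Hopf--Rinow \cite[I.3.7]{BH}, and the topology statement by a local compact-Hausdorff comparison on the finite star. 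What each approach buys: the paper's argument yields the stronger and independently useful fact that metrically bounded sets are contained in finite subcomplexes, at the cost of relying on the taut-string results, which are the most technical part of \cite[I.7]{BH}; your argument replaces that machinery by the soft Hopf--Rinow theorem and a self-contained Lipschitz estimate, but only gives compactness of small balls directly. The implication (ii)$\Rightarrow$(i) is essentially identical in both versions (you merely make explicit the step, implicit in the paper, that a vertex lying in infinitely many simplices has infinitely many neighbours). I see no gaps; the one place to be careful is the claim that the barycentric coordinates on a geodesic simplex of curvature $\kappa$ are Lipschitz with a constant depending only on the shape, which is true (and is exactly what finiteness of $\cS$ is for) but deserves a sentence or a citation of \cite[I.7.9]{BH}.
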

\begin{proof}
For each simplex $a\in\Delta$,
the map $\sigma_a:|a|\longrightarrow s_a$ is a homeomorphism, where
$|a|$ carries the metric topology coming from $d$ \cite[I.7.6]{BH}.
In particular, each simplex $|a|$ is compact.
Since $\cS$ is finite, the distance between any two points $x,y\in|\Delta|$
is realized as the infimum of the lengths of all \emph{taut}
strings 
\footnote{An $m$-string $(x_0,\ldots,x_m)$ in $|\Delta|$ is taut if no three consecutive points $x_{i-1},x_i,x_{i+1}$ are in a common simplex,
and if all triples $x_{i-1},x_i,x_{i+1}$ are 'straight' in a weak sense.}
between $x$ and $y$, cp.~\cite[I.7.20 and I.7.24]{BH}.

Suppose that $\Delta$ is locally finite, that $x\in|\Delta|$, and that $r>0$. We claim that the 
closed ball $\bar B_r(x)$ is compact.
For every $r>0$ there is a number $N>0$ such that every taut 
$m$-string $(x_0,x_1,\ldots,x_m)$ between two points $x=x_0,y=x_m$ at distance $d(x,y)\leq r$
has $m\leq N$ \cite[I.7.28]{BH}.
It follows that there is a sequence of simplices
$a_1,\ldots,a_m\in\Delta$, with $x_{i-1},x_i\in |a_i|$.
Since $\Delta$ is locally finite, the set of all such $m$-chains
of simplices starting at $x$ with $m\leq N$ is finite.
Hence $\bar B_r(x)$ is contained in a finite subcomplex $\Delta_0$,
that is, $\bar B_r(x)\subseteq|\Delta_0|$. 
Since all simplices $|a|$ are compact, $|\Delta_0|$ is compact in the metric topology.

Suppose that $\Delta$ is not locally finite. The vertex set $V$ is closed and discrete by Lemma~\ref{Vcloseddiscreted}.
By assumption, there is a vertex $v$ that is contained in infinitely many simplices.
Put \[r=\sup\{d(v,w)\mid v,w\in V\text{ and } v,w\text{ are in a common simplex}\}.\] 
Since $\cS$ is finite, $r<\infty$.
Then the closed ball $\bar B_r(v)$ contains
the infinite closed discrete set $V\cap \bar B_r(v)$.
In particular, $\bar B_r(v)$ is not compact.

For the last claim we note that the metric topology on $|\Delta|$ is coarser
than the weak topology. Let $A\subseteq|\Delta|$ be compact in the metric topology.
Then $A$ is contained in some closed ball $\bar B_r(v)$.
The argument above shows that $\bar B_r(v)$ is contained in a finite subcomplex
$|\Delta_0|$. In particular, $A$ is contained in $|\Delta_0|$,
and $|\Delta_0|$ is compact in the weak topology.
Since $A$ is also closed in the weak topology, $A$ is compact in the weak topology.
Therefore the identity is a proper map from the weak topology to the metric topology,
and hence a homeomorphism.
\end{proof}
The next result is also well-known and widely used, but we did not find a reference.
\begin{Prop}\label{SimplcialIsos}
Let $\Delta$ be a simplicial complex with an $M_\kappa$-structure
with finitely many shapes, and let $d$ denote the intrinsic metric.
Then the subgroup $\Iso(|\Delta|)\cap S(\Delta,\Delta)$ consisting of all
simplicial isometries is a closed and totally disconnected subgroup of $\Iso(|\Delta|)$.

If $\Delta$ is locally finite, then $\Iso(|\Delta|)\cap S(\Delta,\Delta)$
is locally compact, totally disconnected, and second countable.
\end{Prop}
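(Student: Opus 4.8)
The plan is to treat $G=\Iso(|\Delta|)\cap S(\Delta,\Delta)$ first as an abstract group, then to establish closedness, total disconnectedness, and finally the local-compactness statement. That $G$ is closed under composition is immediate, since a composite of simplicial maps is simplicial and a composite of isometries is an isometry. For inverses I would argue that if $g=|\phi|\in G$ then the vertex map $\phi:V\to V$ is a simplicial automorphism. It is injective because $g$ is injective and $V\subseteq|\Delta|$. Moreover, since $\phi$ is injective, $g$ carries the relative interior of each simplex $|a|$ affinely onto the relative interior of $|\phi(a)|$; applying this to a vertex $w=g(x)$ forces the carrier of $x$ (the unique simplex containing it in its relative interior) to be a single vertex $\{v\}$ with $\phi(v)=w$, so $\phi$ is surjective, and the same relative-interior bookkeeping applied to $g^{-1}$ shows that $\phi^{-1}$ sends simplices to simplices. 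Hence $g^{-1}=|\phi^{-1}|\in G$, and $G$ is a subgroup of $\Iso(|\Delta|)$.

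For closedness I would use that, by Lemma~\ref{Vcloseddiscreted}, the vertex set $V$ is closed and discrete in the metric topology, and that by Lemma~\ref{IsoLemma} the topology on $\Iso(|\Delta|)$ is that of pointwise convergence. Suppose $g$ lies in the closure of $G$, and pick a net $g_i\to g$ in $G$. For each vertex $v$ the points $g_i(v)$ lie in $V$ and converge to $g(v)$, so $g(v)\in\overline V=V$; and since $V$ is discrete, in fact $g_i(v)=g(v)$ eventually. Thus $g$ maps $V$ into $V$, and $g|_V$ is a simplicial vertex map, because for any simplex $a$ we have $g(a)=g_i(a)\in\Delta$ once $i$ is large enough that $g_i$ and $g$ agree on the (finitely many) vertices of $a$. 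Writing $\bar g=|g|_V|$ for its realization, the equality $g_i=\bar g$ holds on each simplex $|a|$ for $i$ large, since simplicial maps are determined by their vertex values; hence $g_i\to\bar g$ pointwise, and uniqueness of limits in the Hausdorff space $|\Delta|$ gives $g=\bar g\in S(\Delta,\Delta)$. Therefore $G$ is closed.

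Total disconnectedness is then cheap. For each vertex $v$ the evaluation $g\mapsto g(v)$ is continuous from $G$ into $|\Delta|$, and since $V$ carries the discrete topology as a metric subspace (Lemma~\ref{Vcloseddiscreted}) and every $g\in G$ maps $V$ into $V$, this evaluation is continuous into the \emph{discrete} set $V$. Assembling these over all vertices yields a continuous map $G\longrightarrow V^V=\prod_{v\in V}V$ into a product of discrete spaces, and it is injective because a simplicial map is determined by its restriction to $V$. A product of discrete spaces is totally disconnected, and a continuous injection into a totally disconnected space has totally disconnected domain, since the image of any connected subset is a single point. Hence $G$ is totally disconnected. (Alternatively one may invoke Lemma~\ref{coeqpw} to identify the subspace topology on $G$ directly with topology~(iii).)

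For the locally finite case, Theorem~\ref{MkappaProper} shows that $(|\Delta|,d)$ is a proper metric space, whence Theorem~\ref{IsoPropThm} gives that $\Iso(|\Delta|)$ is locally compact and second countable. As $G$ is a closed subgroup, it inherits local compactness (a closed subspace of a locally compact Hausdorff space is locally compact) and second countability (every subspace of a second countable space is second countable), while total disconnectedness was already established. I expect the closedness step to be the main obstacle: the essential point is to exploit that the metric topology makes $V$ discrete in order to upgrade pointwise convergence of the isometries to \emph{eventual equality} of the underlying vertex maps, and then to recognize the limit as the realization of the limiting vertex map rather than merely some isometry agreeing with it on $V$.
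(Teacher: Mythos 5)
Your proof is correct and follows essentially the same route as the paper: use the discreteness of the vertex set (Lemma~\ref{Vcloseddiscreted}) and pointwise convergence to show that a limit of simplicial isometries is again simplicial, embed the group continuously and injectively into $V^V$ for total disconnectedness, and invoke Theorem~\ref{MkappaProper} together with Theorem~\ref{IsoPropThm} for the locally finite case. Your version is somewhat more explicit than the paper's at the one delicate point, namely why the limit isometry agrees with the affine realization of its vertex map on entire simplices rather than merely on vertices, and it additionally verifies the (routine) subgroup property.
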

\begin{proof}
Let $g$ be an isometry which is in the closure of $\Iso(|\Delta|)\cap S(\Delta,\Delta)$,
and let $a$ be a simplex.
For each vertex $v\in a$ and every neighborhood $U$ of $g(v)$,
there is some $h\in \Iso(|\Delta|)\cap S(\Delta,\Delta)$ with $h(v)\in U$.
Since $V$ is closed and discrete by Lemma~\ref{Vcloseddiscreted},
the point $g(v)$ is a vertex. Hence there exists $h\in \Iso(|\Delta|)\cap S(\Delta,\Delta)$
with $g(v)=h(v)$ for all $v\in a$. It follows that $g|_{|a|}=h|_{|a|}$ and therefore 
$g$ is simplicial.

Since the space $V^V$ is totally disconnected and since $S(\Delta,\Delta)$ carries
the subspace topology by Lemma~\ref{coeqpw}, $\Iso(|\Delta|)\cap S(\Delta,\Delta)$ is totally disconnected.
The last claim follows from Theorem~\ref{MkappaProper} and Theorem~\ref{IsoPropThm}.
\end{proof}

\section{Buildings}
We recall the definition of a building as a chamber graph, as in 
Tits' \emph{Local approach} \cite{TitsLocal}.
The books \cite{AB,Br,Da,Ro,Weiss1,Weiss2} are excellent references.
Let $I$ be a finite set and let $(m_{i,j})_{i,j\in I}$
be a symmetric matrix with entries in $\{1,2,3,4,5,\ldots,\infty\}$, with
$m_{i,i}=1$ for all $i$ and $m_{i,j}\geq2$ for all $i\neq j$.
The associated \emph{Coxeter system} $(W,I)$ consists of the \emph{Coxeter group} 
$W$,
with the presentation
\[
 W=\langle I\mid (ij)^{m_{i,j}}=1\text{ if }m_{i,j}\neq\infty\rangle,
\]
and its generating set $I$. It follows that the product $ij$ has order $m_{i,j}$ in $W$, cp.~\cite[Lem.~2.1]{Ro}.
The \emph{length function} of $W$ with respect to the generating set $I$ is denoted by $\ell$.
A presentation of a group element $w=i_1\cdots i_m$ in terms of generators $i_1,\ldots,i_m\in I$ is called \emph{reduced}
or \emph{minimal} if $\ell(w)=m$.

\begin{Def}
Let $(W,I)$ be a Coxeter system.
A \emph{building} $\cB=(C,EC,t,\delta)$ of type $(W,I)$ consists of a  
simplicial graph $(C,EC)$ with vertex set $C$, edge set $EC$, and two maps
\[t:EC\longrightarrow I\text{ and }\delta:C\times C\longrightarrow W.\]
The elements of $C$ are called \emph{chambers}, the map $t$ is the \emph{edge coloring},
and the map $\delta$ is the \emph{$W$-valued distance function}. 
Two chambers which are adjacent by an edge of color $i$ are called \emph{$i$-adjacent}.
The graph $(C,EC)$ is called the \emph{chamber graph}.
A \emph{gallery} is a sequence of chambers
$(c_0,\ldots,c_m)$, such that $c_{s-1}$ is $i_s$-adjacent to $c_s$, for $s=1,\ldots,m$.
The \emph{type} of the gallery is the string $(i_1,\ldots,i_m)\in I^m$.
The maps $t$ and $\delta$ are subject to the following axioms.
\footnote{We view the empty graph as the unique building corresponding to $I=\emptyset$.}
\begin{enumerate}[label=(B{\arabic*}),nosep]
\item For every chamber $a\in C$ and $i\in I$, there is at least one chamber $b$ which
is $i$-adjacent to $a$. If another chamber $c$ is $i$-adjacent to $a$, then $b$ and $c$ are also $i$-adjacent.
 \item Suppose that $w\in W$ and that $w=i_1\cdots i_m$ is a reduced presentation.
 Then $\delta(a,b)=w$ holds for two chambers $a,b$ if and only if there is a gallery of type 
 $(i_1,\ldots,i_m)$ from $a$ to $b$.
\end{enumerate}
It follows that $\delta(a,b)=1$ holds if and only if $a=b$, and that 
$\delta(b,a)=\delta(a,b)^{-1}$.
\end{Def}
The cardinality of the set $I$ is called the \emph{rank} of the building.
We note that two chambers $a,b$ are $i$-adjacent if and only if
$\delta(a,b)=i$. Hence $\delta$ determines $EC$ and $t$ uniquely.
Conversely, $\delta$ is determined by $EC$ and $t$ by (B2).
Buildings can be thought of as generalizations
of Cayley graphs of Coxeter groups.
\begin{Ex}\label{ThinEx}
The most basic example of a building of type $(W,I)$ is the Cayley graph of a Coxeter system $(W,I)$.
Thus $C=W$ and $EC=\{\{w,wi\}\mid w\in W\text{ and }i\in I\}$, with $\delta(a,b)=a^{-1}b$ and 
$t(\{w,wi\})=\delta(w,wi)=i$.
This building has the particular property that for every chamber $a$ and every $i\in I$,
there is precisely one other chamber $b$ which is $i$-adjacent to $a$. 
Such buildings are called \emph{thin}.
One can show that every thin building of type $(W,I)$ is isomorphic to the Cayley graph of $(W,I)$.
(For the proof one fixes a chamber $c$ and considers the map $a\longmapsto\delta(c,a)$ from $C$ to $W$.)
A building is called \emph{thick} if for every chamber $a$ and every $i\in I$, there are at least
two distinct chambers $b,c$ which are $i$-adjacent to $a$. 
\end{Ex}
\begin{Ex}\label{RAB}
Let $(G_i)_{i\in I}$ be a finite family of nontrivial groups $G_i$, and let \[G=\coprod_{i\in I}G_i\]
denote their coproduct. If we put $C=G$ and if we call $a,b\in G$ $i$-adjacent if 
$1\neq a^{-1}b\in G_i\subseteq G$, then we obtain a building. The corresponding Coxeter system
has $m_{i,j}=\infty$ for all $i\neq j$. This building is thick if and only if every $G_i$ has at least $3$ elements.

The construction can be modified  \cite[18.1.10]{Da}. 
Suppose that $\Gamma$ is a simplicial graph on the vertex set $I$.
We put \[G_\Gamma=G/\langle\langle [G_i,G_j]\text{ if }i,j\text{ are adjacent in }\Gamma\rangle\rangle.\]
This group is called the \emph{graph product} of the $G_i$.
We put $C=G_\Gamma$ and we define $i$-adjacency as before by $1\neq a^{-1}b\in G_i\subseteq G_\Gamma$.
The associated Coxeter system has $m_{i,j}=\infty$ whenever $i$ and $j$ are different and not joined by an edge
in $\Gamma$,
and $m_{i,j}=2$ if $i$ and $j$ are joined by an edge.

Buildings of this type are called \emph{right-angled buildings}.
These buildings are very different from the so-called \emph{affine} or \emph{spherical buildings}.
The latter are, by Tits' fundamental classification results, closely related to semisimple algebraic groups  
\cite{Weiss1,Weiss2}. 
Right-angled buildings, on the other hand, are similar to trees.
They are easy to construct, while their automorphism groups have many interesting
subgroups.
\end{Ex}
A \emph{subbuilding} of a building is a full subgraph which is a building in its own right (of the same type $(W,I)$).
A subbuilding which is thin is called an \emph{apartment} in the ambient building. One can show that any two
chambers in a building are contained in some apartment \cite[Cor.~3.7]{Ro}. 
Every apartment is by Example~\ref{ThinEx} isomorphic to  the Cayley graph of $(W,I)$.
If $A,A'$ are two apartments, 
then there is a color-preserving graph isomorphism $A\longrightarrow A'$
that fixes $A\cap A'$ element-wise \cite[Thm.~3.11]{Ro}. 

Apartments can be viewed as coordinate charts in buildings,
similarly to the coordinate charts in a manifold.
An \emph{atlas} $\mathbf A$ is a set of apartments such that every pair of chambers of the building is
contained in at least one apartment in $\mathbf A$.
Every building has a unique maximal atlas, consisting of all apartments in the building.
\begin{Def}
An automorphism $h$ of a building is a graph automorphism that preserves the coloring of the edges.
Then the distance function $\delta$ is $h$-invariant, $\delta(ha,hb)=\delta(a,b)$.
Let $G$ be a group that acts on a building $\cB$ as a group of automorphisms.
The action is \emph{Weyl-transitive} if 
for all $w\in W$ and all chambers $a,b,a',b'$ with $w=\delta(a,b)=\delta(a',b')$ there is some $g\in G$ with $ga=a'$ and $gb=b'$.
Let $\mathbf A$ be an atlas. The action of of the group $G$ is \emph{strongly transitive} on the atlas $\mathbf A$
if the action preserves $\mathbf A$, and if for every $a\in A\in \mathbf A$
and $a'\in A'\in\mathbf A$ there is some $g\in G$ with $ga=a'$ and $gA=A'$.
Strongly transitive actions are in particular Weyl-transitive.
The converse is in general not true~\cite{AB1}.
\end{Def}
In Example~\ref{ThinEx}, the Coxeter group $W$ acts (from the left) on itself, and this 
action is strongly transitive and Weyl-transitive.
In Example~\ref{RAB}, the automorphism group of the right-angled building (which contains $G$ resp. $G_\Gamma$ as a subgroup)
acts strongly transitively on the maximal atlas, cp.~\cite[Cor.~1.2]{Cap}.

A gallery in a building from a chamber $a$ to a chamber $b$ can be viewed as a simplicial path in the graph.
One can show that $\ell(\delta(a,b))$ is the graph-theoretic distance between $a$ and $b$, i.e. a gallery
$(c_0,\ldots,c_m)$ of type $(i_1,\ldots,i_m)$ is minimal if and only if $w=i_1\cdots i_m$ is reduced.
Moreover, one can show that a minimal gallery is uniquely determined by its endpoints and its type \cite[3.1]{Ro}.
Thus, a group $G$ acts Weyl-transitively on a building if and only if for every reduced type $(i_1,\ldots,i_m)$
and every pair of galleries $(c_0,\ldots,c_m)$ and $(c_0',\ldots,c_m')$ of the same type $(i_1,\ldots, i_m)$
there is a $g\in G$ with \[(gc_0,\ldots,gc_m)=(c_0',\ldots,c_m').\]
Now we turn to infinite galleries.
\begin{Def}
We call an infinite string $(i_s)_{s\in\ZZ}$ in $I^\ZZ$ \emph{reduced} if every finite substring $(i_s,i_{s+1},\ldots,i_{s+r})$
is reduced, i.e. if $\ell(i_si_{s+1}\cdots i_{s+r})=r+1$ for all $s\in\ZZ$ and $r\geq 0$.
One can show that the Coxeter group $W$ is infinite if and only if every finite reduced string $(i_1,\ldots,i_m)$
is part of an infinite reduced string~\cite[Thm.~2.16]{Ro}.
An \emph{infinite gallery} of type $(i_s)_{s\in\ZZ}$ is a sequence of chambers $(c_s)_{s\in\ZZ}$,
where  $c_{s-1}$ is $i_s$-adjacent to $c_s$, for all $s\in\ZZ$.
If $W$ is infinite, then every minimal gallery is part of an infinite reduced gallery in the building.
Moreover, every infinite reduced gallery is contained in some apartment in the building \cite[Thm.~3.6]{Ro}.

We say that a group $G$ acts \emph{infinitely Weyl-transitively} on a building if for every infinite reduced
string $(i_s)_{s\in\ZZ}$ in $I^\ZZ$ and every pair of infinite galleries $(c_s)_{s\in\ZZ}$ and $(c_s')_{s\in\ZZ}$
of type $(i_s)_{s\in\ZZ}$
there is an element $g\in G$ with $gc_s=c_s'$, for all $s\in\ZZ$.
\end{Def}
Suppose that $W$ is infinite and that $\bf A$ is the maximal atlas of a building $\cB$ of type
$(W,I)$. For a group $G$ acting by automorphisms on $\cB$, 
we have the implications
\[
 \text{Strongly transitive on $\bf A$}\quad\Longrightarrow\quad
 \text{infinitely Weyl-transitive}
 \quad\Longrightarrow\quad
 \text{Weyl-transitive}.
\]
If $W$ is finite, then every Weyl-transitive action is strongly transitive on the maximal atlas
(and the maximal atlas is the unique atlas).

\section{Proper actions on buildings}

A building is called \emph{locally finite} if for every color $i\in I$ and every
chamber $a$, the set of all chambers which are $i$-adjacent to $a$ is finite.
In Example~\ref{RAB} this holds if and only if the groups $G_i$ are  finite.
\begin{Thm}\label{AutBuilding}
Let $\cB$ be a building and put $d(a,b)=\ell(\delta(a,b))$.
Then $(C,d)$ is a complete metric space.
With respect to the topology of pointwise convergence on chambers, the automorphism group $\Aut(\cB)$
is a closed and totally disconnected subgroup of $\Iso(C)$.
If $\cB$ is locally finite, then $\Aut(\cB)$ is locally compact and the action on 
$C$ is proper.
\end{Thm}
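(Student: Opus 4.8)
The plan is to verify that $d$ is a metric, then establish the three assertions in turn, all resting on a single structural observation: the chamber metric is \emph{discrete}. Since $\delta(a,b)=w$ forces the existence of a gallery of length $\ell(w)$ from $a$ to $b$ (take a reduced word for $w$ and apply (B2)), the chamber graph is connected and $d(a,b)=\ell(\delta(a,b))$ is finite; as the excerpt records, this number equals the graph-theoretic distance. The triangle inequality is then immediate from concatenating minimal galleries, symmetry follows from $\delta(b,a)=\delta(a,b)^{-1}$ together with $\ell(w)=\ell(w^{-1})$, and $d(a,b)=0$ exactly when $a=b$ because $\delta(a,b)=1$ characterizes $a=b$. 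The key point is that $d$ is $\NN$-valued, so distinct chambers lie at distance at least $1$ and $(C,d)$ is a discrete metric space; completeness is then automatic, since every Cauchy sequence is eventually constant.

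Next I would place $\Aut(\cB)$ inside $\Iso(C)$. An automorphism preserves $\delta$, hence $d$, so $\Aut(\cB)$ is a subgroup of $\Iso(C)$, and by Lemma~\ref{IsoLemma} the topology of pointwise convergence coincides there with the compact-open topology. Total disconnectedness uses discreteness of $C$: the function space $C^C$ with the product topology is totally disconnected, and $\Iso(C)$, hence $\Aut(\cB)$, inherits this as a subspace. For closedness, suppose $g\in\Iso(C)$ lies in the closure of $\Aut(\cB)$. Given any two chambers $a,b$, pointwise convergence provides some $h\in\Aut(\cB)$ with $ha=ga$ and $hb=gb$, so that $\delta(ga,gb)=\delta(ha,hb)=\delta(a,b)$. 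Thus $g$ preserves $\delta$; since $i$-adjacency is detected by $\delta(\cdot,\cdot)=i$, the bijection $g$ is a color-preserving graph automorphism, i.e.\ $g\in\Aut(\cB)$.

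Finally, assume $\cB$ is locally finite. Because $I$ is finite, each chamber then has only finitely many neighbors in $(C,EC)$, so the chamber graph is locally finite and, by induction on the radius, every closed ball $\bar B_r(a)$ is a finite set. Combined with discreteness this shows that a subset of $C$ is compact if and only if it is bounded and closed, i.e.\ $(C,d)$ is a proper metric space. Theorem~\ref{IsoPropThm} then gives that $\Iso(C)$ is locally compact and acts properly on $C$. Since $\Aut(\cB)$ is a closed subgroup of $\Iso(C)$ by the previous paragraph and $C$ is trivially $\Aut(\cB)$-invariant, Corollary~\ref{InducedIsProper} yields that the restricted action $\Aut(\cB)\times C\longrightarrow C$ is proper, while local compactness of $\Aut(\cB)$ follows from its being a closed subgroup of the locally compact group $\Iso(C)$.

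I do not expect a serious obstacle: once discreteness and (under local finiteness) properness of $(C,d)$ are in hand, the machinery of Section~2 does the work. The one point deserving care is the closedness of $\Aut(\cB)$ in $\Iso(C)$, because a general isometry of $(C,d)$ preserves only $\ell\circ\delta$ and not $\delta$ itself; the inclusion $\Aut(\cB)\subseteq\Iso(C)$ is therefore proper, and the argument genuinely uses that $\delta$ is recovered on each pair of chambers in the limit.
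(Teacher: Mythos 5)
Your proof is correct and follows essentially the same route as the paper: discreteness of $(C,d)$ gives completeness and (via $C^C$) total disconnectedness, the closedness of $\Aut(\cB)$ in $\Iso(C)$ is detected on pairs of adjacent chambers, and in the locally finite case finiteness of balls makes $(C,d)$ proper so that Theorem~\ref{IsoPropThm} and Corollary~\ref{InducedIsProper} apply. Your closedness argument is just the direct form of the paper's contrapositive one, and the extra details you supply (metric axioms, eventual constancy of Cauchy sequences, induction on the radius) are all sound.
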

We note that the topology of pointwise convergence on $\Aut(\cB)$ depends only on the discrete space $C$,
not on the structure of the building (or the specific metric $d$).
Since $\Aut(\cB)$ is contained in the totally disconnected space $C^C$,
it is totally disconnected.
\begin{proof}
We have $d(a,b)=n$ if and only if there is a minimal gallery of length $n$ from $a$ to $b$ \cite[3.1]{Ro}.
Hence $d$ is a (complete) metric on $C$. By Lemma~\ref{IsoLemma}, the isometry group of $(C,d)$ is a topological
group in the topology of pointwise convergence. Suppose that $g\in\Iso(C)$ is not an
automorphism of $\cB$. Then there exists a pair of $i$-adjacent chambers $a,b$
so that $ga,gb$ are $j$-adjacent, for some $j\neq i$.
The set $\{h\in\Iso(C)\mid ha=ga\text{ and }hb=gb\}$ is a neighborhood of $g$ and disjoint
from $\Aut(\cB)$. Hence $\Aut(\cB)$ is a closed subgroup in $\Iso(C)$.

If $\cB$ is locally finite, then the graph $(C,EC)$ is
locally finite and therefore every metric ball in $C$ is finite and hence compact. 
The isometry group $\Iso(C)$ of $(C,d)$ is therefore 
locally compact and acts properly on $C$ by Theorem~\ref{IsoPropThm}.
Thus $\Aut(\cB)$ is also locally compact and acts properly on $C$
by Corollary~\ref{InducedIsProper}.
\end{proof}
\begin{Thm}\label{inftrans}
Let $\cB$ be a building of type $(W,I)$, with $W$ infinite, and let $G$ be a topological group that acts properly on the discrete metric space $C$ as a group of automorphisms on $\cB$.
Then the following are equivalent.
\begin{enumerate}[\rm(i),nosep]
 \item The action is infinitely Weyl-transitive.
 \item The action is Weyl-transitive.
\end{enumerate}
\end{Thm}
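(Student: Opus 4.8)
The plan is to observe that one direction is free and that all the work lies in the converse, where properness enters. The implication (i)$\Rightarrow$(ii) is exactly the general fact recorded in Section~4 (infinite Weyl-transitivity implies Weyl-transitivity) and uses no properness: given a reduced type $(i_1,\ldots,i_m)$ and two galleries $(c_0,\ldots,c_m)$, $(c_0',\ldots,c_m')$ of that type, I would extend the type to a bi-infinite reduced string (possible since $W$ is infinite), extend both galleries to infinite galleries of this type (every minimal gallery is part of an infinite reduced gallery), and apply (i); the resulting $g$ matches the finite sub-galleries. So the substance of the theorem is (ii)$\Rightarrow$(i).

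For (ii)$\Rightarrow$(i), let $(i_s)_{s\in\ZZ}$ be an infinite reduced string and let $(c_s)_{s\in\ZZ}$, $(c_s')_{s\in\ZZ}$ be two infinite galleries of this type. The idea is to approximate the desired $g$ on larger and larger finite windows and then extract a limit by a compactness argument. For each $n\geq 0$ set
\[
 H_n=\{g\in G\mid gc_s=c_s'\text{ for }-n\leq s\leq n\}.
\]
Any element of $\bigcap_n H_n$ satisfies $gc_s=c_s'$ for all $s\in\ZZ$ and hence witnesses infinite Weyl-transitivity, so it suffices to prove $\bigcap_n H_n\neq\emptyset$.

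I would establish three points. First, each $H_n$ is nonempty: the finite sub-gallery $(c_{-n},\ldots,c_n)$ has reduced type, being a substring of the reduced infinite string, so Weyl-transitivity supplies some $g_n\in G$ matching $(c_{-n},\ldots,c_n)$ with $(c_{-n}',\ldots,c_n')$, i.e.\ $g_n\in H_n$. Second, each $H_n$ is compact: since the action is proper, every point stabilizer $G_{c_s}$ is compact by the Proposition on proper actions, and fixing $g_n\in H_n$ one checks that $H_n=g_n\cdot\bigcap_{s=-n}^{n}G_{c_s}$, a left coset of the intersection of finitely many compact (hence closed) subgroups; thus $H_n$ is a translate of a compact group, so compact and closed in the Hausdorff group $G$. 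Third, the $H_n$ are nested, $H_0\supseteq H_1\supseteq\cdots$, nonempty, and closed, all contained in the compact set $H_0$; by the finite intersection property $\bigcap_n H_n\neq\emptyset$, which completes the proof.

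The one place the hypothesis genuinely bites is the second point: the whole argument is a routine ``nested nonempty closed subsets of a compact set'' limit, and the only nontrivial input is that the sets $H_n$ are compact. This compactness is precisely what properness delivers, through compactness of the point stabilizers $G_{c_s}$, so that is the step I would treat most carefully. Note that no local finiteness of $\cB$ is needed, only that the given action is proper.
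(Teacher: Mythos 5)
Your proposal is correct and follows essentially the same route as the paper: a decreasing sequence of nonempty compact subsets of $G$ obtained from Weyl-transitivity (nonemptiness) and properness (compactness), concluded by the finite intersection property. The only cosmetic difference is that the paper's sets $G_s$ impose only the two endpoint conditions $g(c_{\pm s})=c'_{\pm s}$ and rely on the uniqueness of minimal galleries of a given type to get nestedness and the final conclusion, whereas your $H_n$ builds the full window of conditions into the definition, which makes those two steps immediate.
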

\begin{proof}
As we observed in the last section, (i) implies (ii).

Suppose that the action is Weyl-transitive, that $(i_s)_{s\in\ZZ}$ is reduced and that $(c_s)_{s\in\ZZ}$ and $(c_s')_{s\in \ZZ}$ are two
infinite galleries, both of type  $(i_s)_{s\in \ZZ}$. For each $s\geq 0$
put \[G_s=\{g\in G\mid g(c_s)=c_s'\text{ and }g(c_{-s})=c_{-s}'\}.\]
The $G_s$ are compact (because the action on $C$ is proper) and nonempty (because the action is Weyl-transitive).
Moreover, $G_s\supseteq G_{s+1}$. Hence $\bigcap\{G_s\mid s\geq 0\}\neq\emptyset$.
\end{proof}
So far, we have considered buildings as edge-colored graphs. Now we turn to
geometric realizations.
Let $\cB$ be a building. The chamber graph $(C,EC)$ is then a $1$-dimensional simplicial complex,
which we denote by $\Delta_C$.
\begin{Prop}\label{TopologyFromDeltaC}
Let $\cB$ be a building, with chamber set $C$. Then the following topologies on $\Aut(\cB)$ coincide.
\begin{enumerate}[\rm(i),nosep]
 \item The topology of pointwise convergence on the discrete set $C$.
 \item The topology of pointwise convergence, for any topology on $|\Delta_C|$ in which $C$ is a discrete subset.
 \item The compact-open topology, for the weak topology  on $|\Delta_C|$.
\end{enumerate}
Suppose that $\Aut(\cB)$ carries this topology and that $|\Delta_C|$ carries some topology.
Then the joint evaluation map
\[
 \Aut(\cB)\times|\Delta_C|\longrightarrow|\Delta_C|
\]
is continuous if one of the following holds.
\begin{enumerate}[\rm(a),nosep]
 \item The topology on $|\Delta_C|$ is invariant and locally finite.
 \item The topology on $|\Delta_C|$ comes from an invariant metric $d$ in which $C$ is discrete.
\end{enumerate}
\end{Prop}
\begin{proof}
The claim about the topologies on $\Aut(\cB)$ follows at once from Lemma~\ref{coeqpw}.
If $|\Delta_C|$ carries an invariant locally finite topology, then joint evaluation is continuous by Proposition~\ref{Eval}.
If $d$ is an invariant metric and if $C$ is discrete, then $\Aut(\cB)\subseteq\Iso(|\Delta_C|)$ carries
the topology of pointwise convergence by (ii) and hence joint evaluation is continuous by Lemma~\ref{IsoLemma}.
\end{proof}
Now we compare proper actions on $C$ with proper actions on $|\Delta_C|$.
\begin{Prop}
Suppose that a topological group $G$ acts as a group of automorphisms on a building $\cB$ and that $|\Delta_C|$
carries a locally finite, $G$-invariant topology in which the chamber set $C$ is closed and discrete.
Assume also that $|\Delta_C|\times|\Delta_C|$ is a $k$-space.
The following are equivalent.
\begin{enumerate}[\rm(i),nosep]
 \item The action of $G$ on $C$ is continuous and proper.
 \item The action of $G$ on $|\Delta_C|$ is continuous and proper.
\end{enumerate}
If these hold, then $G$ is locally compact.
\end{Prop}
\begin{proof}
This follows from Proposition~\ref{ProperOnVertices}.
\end{proof}
There are (at least) two other important simplicial complexes associated to buildings,
the \emph{Tits complex} $\Delta_T$ and the (subdivided) \emph{Davis complex} $\Delta_D$.
\begin{Def}[The Tits complex]
Let $(W,I)$ be a Coxeter system and let $J\subseteq I$ be
a subset. Let $W_J\subseteq W$ denote the subgroup generated by $J$. Then $(W_J,J)$
is again a Coxeter system, for the restricted Coxeter matrix $(m_{i,j})_{i,j\in J}$
\cite[Cor.~2.14]{Ro}.
Let $c$ be a chamber in a building of type $(W,I)$ and let $J\subseteq I$.
The \emph{$J$-residue} $\Res_J(c)$ of $c$ is the subgraph consisting of all chambers that can be reached
from $c$ using galleries whose edge colors are in $J$, and the edges between these chambers. 
Then $\Res_J(c)$ is again a building of type $(W_J,J)$ \cite[Thm.~3.5]{Ro}. 

The poset $\{\Res_J(c)\mid c\in C\text{ and }J\subseteq I\}$,
ordered by reversed inclusion, is poset-isomorphic to a simplicial complex $\Delta_T$
(where $\Delta=\Res_I(c)$ corresponds to the empty set). 
The vertex set of $\Delta_T$
is 
\[
V=\{\Res_{I\setminus\{i\}}(c)\mid c\in C\text{ and }i\in I\},
\]
The simplices are finite subsets 
\[a=\{\Res_{I\setminus\{i_0\}}(c),\Res_{I\setminus\{i_1\}}(c),\ldots,
\Res_{I\setminus\{i_k\}}(c)\},\]
with $\{i_0,\ldots,i_k\}\subseteq I$,
and the simplex $a$ corresponds to $\bigcap a=\Res_{I\setminus\{i_0,\ldots,i_k\}}(c)$
in the poset isomorphism
\cite[5.6]{AB}. In particular,
the maximal simplices in $\Delta_T$ correspond to the chambers in $C$.
We call the geometric
realization $|\Delta_T|$ of $\Delta_T$ the \emph{Tits realization}.
This is the simplicial complex considered in \cite{TitsSpherical}.
\end{Def}
The Solomon-Tits Theorem \cite[Thm.~4.127]{AB} asserts that \[|\Delta_T|\simeq \{*\}\] is contractible (in the weak topology)
if $W$ is infinite.
If $W$ is finite, then it contains a (unique) element $w_0$ of maximal length.
Then $|\Delta_T|$ has the homotopy type of a wedge of spheres,
\[|\Delta_T|\simeq\bigvee_Q\bS^{n-1},\] where $n=\#I$ and $Q=\{a\in C\mid \delta(a,c)=w_0\}$,
for some (any) $c\in C$.

Earlier in this section, we endowed the automorphism group of a building with the topology 
of pointwise convergence on the chamber set $C$. This topology works well with the Tits realization.
\begin{Prop}\label{TitsRealization}
Let $\cB$ be a building, with chamber set $C$. Let $V$ be the vertex set of the Tits complex.
The following topologies on $\Aut(\cB)$ coincide.
\begin{enumerate}[\rm(i),nosep]
 \item The topology of pointwise convergence on the discrete set $C$.
 \item The topology of pointwise convergence on the discrete set $V$.
 \item The topology of pointwise convergence, for any topology on $|\Delta_T|$ in which $V$ is a discrete subset.
 \item The compact-open topology, for the weak topology  on $|\Delta_T|$.
\end{enumerate}
Suppose that $\Aut(\cB)$ carries this topology and that $|\Delta_T|$ carries some topology.
Then the joint evaluation map
\[
 \Aut(\cB)\times|\Delta_T|\longrightarrow|\Delta_T|
\]
is continuous if one of the following holds.
\begin{enumerate}[\rm(a),nosep]
 \item The topology on $|\Delta_T|$ is invariant and locally finite.
 \item The topology on $|\Delta_T|$ comes from an invariant metric $d$ in which $V$ is discrete.
\end{enumerate}
\end{Prop}
\begin{proof}
If two automorphisms agree on a finite set of chambers, then they agree on the finite
set of vertices of the corresponding maximal simplices, and vice versa.
Hence the topologies in (i) and (ii) coincide.
The rest of the proof is as in Proposition~\ref{TopologyFromDeltaC}.
\end{proof}
Proposition~\ref{ProperOnVertices} readily implies the following.
\begin{Prop}\label{ProperOnDelta_T}
Suppose that a topological group $G$ acts as a group of automorphisms on a building $\cB$ and that $|\Delta_T|$
carries a locally finite, $G$-invariant topology in which the vertex set $V$ is closed and discrete.
Assume also that $|\Delta_T|\times|\Delta_T|$ is a $k$-space.
The following are equivalent.
\begin{enumerate}[\rm(i),nosep]
 \item The action of $G$ on $V$ is continuous and proper.
 \item The action of $G$ on $|\Delta_T|$ is continuous and proper.
\end{enumerate}
If these hold, then $G$ is locally compact.
\end{Prop}
The following example shows that we cannot expect such a result for proper actions on chambers.
The reason is that the Tits complex of a locally finite building need not be locally finite.
\begin{Ex}
Let $X,Y,Z$ be nontrivial finite groups and consider the locally finite right-angled building $\Delta$ corresponding to the graph
product
\[G=X*Y*Z/\langle\langle[X,Z]\rangle\rangle,\]
as in Example~\ref{RAB}. The chambers are the elements of $G$ and the Tits realization is $2$-dimensional.
There are three types of vertices in $\Delta_T$,
corresponding to the cosets $V_1=G/(X*Y)$, $V_2=G/(Y*Z)$, and $V_3=G/(X\times Z)$.
The action of $G$ on the chamber set is free and thus the action of the discrete group $G$ on the chamber set $C$ is proper.
On the other hand, the stabilizer of a vertex in $V_1$ is isomorphic to $X*Y$ and hence infinite.
Therefore the action of $G$ on $|\Delta_T|$ in the weak topology is not proper.
\end{Ex}
The Tits complex $\Delta_T$ is mainly interesting if $W$ is finite (spherical) or of irreducible
euclidean type, because then $|\Delta_T|$ carries an interesting invariant CAT(1) resp. CAT(0) metric.
For many questions arising in geometric group theory, the Tits realization is not the right 
geometric object.
The Davis realization of a building $\cB$ is defined as follows.
\begin{Def}[The subdivided Davis complex]
A residue $\Res_J(c)$ in a building $\cB$  is called \emph{spherical} if $W_J$ is finite.
Let $V$ denote the set of all spherical residues of $\cB$. Let $\Delta_D$
denote the simplicial complex whose simplices are ascending
chains $\Res_{J_0}(c)\subseteq \Res_{J_1}(c)\subseteq\cdots\subseteq \Res_{J_m}(c)$,
with $J_0\subseteq J_1\subseteq\cdots\subseteq J_m$ and with $W_{J_m}$ finite.
A locally finite spherical building is finite. Therefore the subdivided Davis complex $\Delta_D$ is locally finite if and only if $\cB$ is locally finite.
The \emph{Davis realization} of $\cB$ is defined to be the set $|\Delta_D|$.
\footnote{The simplicial complex $\Delta_D$ is not quite the \emph{Davis complex}; rather, it is
the barycentric subdivision of the Davis complex. However, $|\Delta_D|$ is the geometric realization of the Davis complex.}
The Davis realization is contractible (with respect to the weak topology).
More importantly, it admits a CAT(0) metric.
This metric $d$ is the intrinsic metric of an $M_0$-structure on $\Delta_D$ with finitely many 
shapes~\cite[18.3]{Da}. If the building is locally finite, then the Davis realization 
$(|\Delta_D|,d)$ is a proper metric space by Theorem~\ref{MkappaProper}, 
and $\Aut(\cB)$ is locally compact,
totally disconnected, and acts properly and isometrically on $|\Delta_D|$.
\end{Def}
\begin{Prop}\label{DavisTopology}
Let $\cB$ be a building, with chamber set $C$. Let $V$ be the vertex set of the subdivided Davis complex.
The following topologies on $\Aut(\cB)$ coincide.
\begin{enumerate}[\rm(i),nosep]
 \item The topology of pointwise convergence on the discrete set $C$.
 \item The topology of pointwise convergence on the discrete set $V$.
 \item The topology of pointwise convergence, for any topology on $|\Delta_D|$ in which $V$ is a discrete subset.
 \item The compact-open topology, for the weak topology  on $|\Delta_D|$.
\end{enumerate}
Suppose that $\Aut(\cB)$ carries this topology and that $|\Delta_D|$ carries some topology.
Then the joint evaluation map
\[
 \Aut(\cB)\times|\Delta_D|\longrightarrow|\Delta_D|
\]
is continuous if one of the following holds.
\begin{enumerate}[\rm(a),nosep]
 \item The topology on $|\Delta_D|$ is invariant and locally finite.
 \item The topology on $|\Delta_D|$ comes from an invariant metric $d$ in which $V$ is discrete.
\end{enumerate}
\end{Prop}
All the assumptions in Proposition~\ref{DavisTopology} and in Proposition~\ref{ProperOnDelta_D} below are 
met by the CAT(0) metric on the Davis realization $|\Delta_D|$ as an $M_0$-complex.
\begin{proof}
Every chamber is a vertex in  $\Delta_D$, hence the topologies in (i) and (ii) coincide.
The rest of the proof is as in Proposition~\ref{TopologyFromDeltaC}.
\end{proof}
Similarly, we have the following result about proper actions.
\begin{Prop}\label{ProperOnDelta_D}
Suppose that a topological group $G$ acts as a group of automorphisms on a building $\cB$ and that $|\Delta_D|$
carries a locally finite, $G$-invariant topology in which the vertex set $V$ is closed and discrete.
Assume also that $|\Delta_D|\times|\Delta_D|$ is a $k$-space.
The following are equivalent.
\begin{enumerate}[\rm(i),nosep]
 \item The action of $G$ on $V$ is continuous and proper.
 \item The action of $G$ on $|\Delta_D|$ is continuous and proper.
\end{enumerate}
If these hold, then $G$ is locally compact.
\end{Prop}
\begin{Ex}
Let $A,B$ be infinite groups, and put $G=A*B$. The corresponding right-angled building is the Bass-Serre tree $T$
of this free product, and $\Delta_D$ is the barycentric subdivision of $T$. The action of the discrete group $G$ on the chamber set
$C=G$ is free and therefore proper. On the other hand, the vertex stabilizers in the tree are infinite and hence
the action on the vertices is not proper.
\end{Ex}
However, we have the following result, which supplements Proposition~\ref{ProperOnDelta_D}.
\begin{Lem}
Suppose that a topological group $G$ acts as a group of automorphisms on a locally finite building $\cB$.
The following are equivalent.
\begin{enumerate}[\rm(i),nosep]
 \item The action of $G$ on the chamber set $C$ is continuous and proper.
 \item The action of $G$ on the set of spherical residues $V$ is continuous and proper.
\end{enumerate}
\end{Lem}
\begin{proof}
The action is continuous and proper if stabilizers are compact and open.
Every chamber is a spherical residue and therefore (ii) implies (i).
Suppose that (i) holds and that $v=\Res_J(c)$ is a spherical residue.
Then $G_c\subseteq G_v$ and therefore $G_v$ is open.
Since $\cB$ is locally finite, $\Res_J(c)$ is finite and thus $G_c$ has finite index in $G_v$.
Hence $G_v$ is compact.
\end{proof}
We end with a geometric variant of Theorem~\ref{inftrans}.
By an \emph{infinite line} $L$ in a metric space $X$ we mean an isometric embedding 
$L:\RR\longrightarrow X$. Suppose that $|\Delta_D|$ is the Davis realization
of a building $\cB$, with its intrinsic CAT(0) metric $d$. Then for every infinite line
$L$ in $|\Delta_D|$, there is an apartment $A$ in $\cB$ such that 
\[
 L(\RR)\subseteq |A_D|\subseteq|\Delta_D|,
\]
where $|A_D|$ is the Davis realization of $A$ \cite[Thm.~E]{CapHag}. 
Lines exist in the Davis complex, provided that $W$ is infinite.

We say that two infinite lines $L,L'$ in $|\Delta_D|$ are of the \emph{same type}
if there exist apartments $A,A'$ in $\cB$, with $L(\RR)\subseteq |A_D|$
and $L'(\RR)\subseteq |A'_D|$, and a building isomorphism $\psi:A\longrightarrow A'$
with $L'=\psi\circ L$.
\begin{Prop}\label{linetrans}
Let $\cB$ be a building of type $(W,I)$, with $W$ infinite, and let $G$ be a topological group that acts as a group of automorphisms on $\cB$.
If the action on the chambers is proper and Weyl-transitive, then it is transitive on the set of all infinite lines of any given type.
\end{Prop}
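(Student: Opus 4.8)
The plan is to reduce the statement to the infinite Weyl-transitivity established in Theorem~\ref{inftrans}. Since $G$ acts properly and Weyl-transitively and $W$ is infinite, Theorem~\ref{inftrans} shows that the action is in fact infinitely Weyl-transitive; this is the combinatorial input. The geometric input is a translation between infinite lines in the Davis realization and bi-infinite reduced galleries, carried out inside a single apartment, and the final task is to push this translation back so that a $g$ matching galleries actually matches the lines.

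First I would attach to the line $L$ a bi-infinite reduced gallery. By \cite[Thm.~E]{CapHag} we may fix an apartment $A$ with $L(\RR)\subseteq|A_D|$, and inside $|A_D|$ the CAT(0) Davis realization is tiled by the closed Davis chambers $D(c)$, one for each chamber $c$ of $A$. The geodesic line $L$ meets a bi-infinite sequence of these tiles, consecutive tiles share a panel, so the corresponding chambers $(c_s)_{s\in\ZZ}$ form a gallery; and since a bi-infinite CAT(0) geodesic crosses each wall of the Coxeter complex at most once, this gallery is reduced, with a well-defined reduced type $(i_s)_{s\in\ZZ}$. In particular $L(\RR)\subseteq\bigcup_{s\in\ZZ}D(c_s)$. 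Because $\psi\colon A\longrightarrow A'$ is a colour-preserving building isomorphism and $L'=\psi\circ L$, the chambers $c_s'=\psi(c_s)$ form a bi-infinite reduced gallery of the \emph{same} type with $L'(\RR)\subseteq\bigcup_{s\in\ZZ}D(c_s')$. Infinite Weyl-transitivity now yields $g\in G$ with $gc_s=c_s'=\psi(c_s)$ for all $s\in\ZZ$.

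It remains to upgrade the chamberwise agreement $gc_s=\psi(c_s)$ to the equality $g\circ L=L'$ of parametrised lines, which I would do tile by tile. The vertices of the closed Davis chamber $D(c_s)$ are exactly the spherical residues $\Res_J(c_s)$ containing $c_s$, and any automorphism $h$ sends the vertex $\Res_J(c_s)$ to $\Res_J(hc_s)$, since it carries $J$-galleries to $J$-galleries. As $gc_s=\psi(c_s)$, both $g$ and $\psi$ send every vertex of $D(c_s)$ to the corresponding vertex of $D(c_s')$, hence, being simplicial isometries, they agree on all of $D(c_s)$. Therefore $g$ and $\psi$ agree pointwise on $\bigcup_{s\in\ZZ}D(c_s)\supseteq L(\RR)$, so $g(L(t))=\psi(L(t))=L'(t)$ for all $t$, that is $g\circ L=L'$, as required. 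I expect the main obstacle to be the geometric translation of the second paragraph: one must know that the line is captured by the closed Davis chambers of a single reduced bi-infinite gallery, which is delicate when $L$ runs along walls of the Coxeter complex and the enclosing gallery is not unique. I would handle this by simply choosing any bi-infinite reduced gallery whose closed Davis chambers cover $L(\RR)$; whatever choice is made for $L$, applying $\psi$ produces the matching gallery for $L'$, so the remaining argument is insensitive to the ambiguity.
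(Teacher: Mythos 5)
Your overall strategy --- reduce to infinite Weyl-transitivity via Theorem~\ref{inftrans} and then transport the line along a bi-infinite gallery --- is a legitimate alternative in outline, and your third paragraph (two simplicial automorphisms that agree on a chamber agree on all vertices of its closed Davis chamber, hence on its geometric realization) is exactly the mechanism the paper also uses to pass from matched chambers to matched points. The gap is in the second paragraph: the existence of a bi-infinite \emph{reduced} gallery $(c_s)_{s\in\ZZ}$ with $L(\RR)\subseteq\bigcup_s D(c_s)$ is precisely the delicate point, and neither of your supporting claims holds as stated. The tiles successively met by $L$ need not share panels: already for $W=\widetilde A_1\times\widetilde A_1$, whose Davis realization is $\RR^2$ tiled by unit squares, the diagonal line $t\longmapsto (t,t)$ meets the squares $[n,n+1]^2$, and consecutive ones share only a vertex, so one must interpolate extra chambers; and when $L$ runs inside a wall, or only touches one, ``the sequence of tiles $L$ meets'' is not a gallery at all, and a side of the wall must be chosen coherently so that no wall ends up crossed twice. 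Your proposed repair --- simply choose \emph{any} bi-infinite reduced gallery whose closed Davis chambers cover $L(\RR)$ --- assumes the existence you need to establish. Such a gallery does exist (one can argue via convexity of walls and of combinatorial intervals in $|A_D|$), but that is a genuine lemma requiring proof, not an observation.

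The paper sidesteps this entirely. For each $t$ it picks \emph{any} maximal simplex containing $L(t)$, with underlying chamber $c_t$ (no gallery structure is needed), uses plain Weyl-transitivity to find, for each $t\geq 0$, some $g$ with $g(c_{\pm t})=\psi(c_{\pm t})$, notes via your vertex argument that such a $g$ sends $L(\pm t)$ to $L'(\pm t)$, and then concludes by the finite intersection property of the compact nonempty sets $G_t=\{g\in G\mid g(L(\pm t))=L'(\pm t)\}$; these are nested because the CAT(0) geodesic between matched endpoints is unique, so $g\in G_t$ already forces $g(L(s))=L'(s)$ for $|s|\leq t$. If you want to keep your route through Theorem~\ref{inftrans}, you must either prove the covering-gallery lemma or replace it by this two-points-at-a-time compactness argument.
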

\begin{proof}
Let $L,L'$ be two infinite lines of the same type, and let $A,A'$ be apartments in $\cB$ whose
Davis realizations contain $L$ and $L'$, respectively. Let $\psi:A\longrightarrow A'$ be an 
isomorphism with $L'=\psi\circ L$.
For every $t\in \RR$ we choose a maximal simplex
$s_t$ in $A_D$ with $L(t)\in|s_t|$. Since $s_t$ is maximal, it is of the form 
\[
 s_t=\{\{c_t\}\subseteq \Res_{j}(c_t)\subseteq\cdots\subseteq\Res_{J_{m_t}}(c_t)\},
\]
for some chamber $c_t$ and some maximal spherical subset $J_{m_t}\subseteq I$.
We put $c'_t=\psi(c_t)$ and $s_t'=\psi(s_t)$.
For every $t\geq 0$ there is some $g\in G$ with 
\[g(c_t)=c'_t\text{ and } g(c_{-t})=c'_{-t},\]
because the action of $G$ is Weyl-transitive.
Then
\[g(L(t))=L'(t)\text{ and }g(L(-t))=L'(-t),\]
because $g(s_t)=s'_t$ and $g(s_{-t})=s'_{-t}$. We put 
\[
 G_t=\{g\in G\mid g(L(t))=L'(t)\text{ and }g(L(-t))=L'(-t)\}.
\]
Then $G_t$ is compact (because the action is proper) and nonempty 
by the remark above.
For $g\in G_t$ and $-t\leq s\leq t$ we have 
$g(L(s))=L'(s)$, because geodesics in CAT(0) spaces are unique.
Hence $G_{r+t}\subseteq G_t$ for $r\geq 0$.
Thus $\bigcap\{G_t\mid t\geq0\}\neq\emptyset$.
\end{proof}
I do not know if transitivity on the set of infinite lines of any fixed type implies Weyl-transitivity.
This is related to the question which geodesics in the Davis realization are contained in infinite lines.

We finally apply our results to euclidean buildings and we recall the relevant notions.
\begin{Def}\label{EucDef}
Suppose that $\cH$ is a collection of affine hyperplanes in some euclidean space $\RR^m$,
such that $\cH$ is locally finite and invariant under the group $W$ generated by the 
reflections along the hyperplanes in $\cH$.
The hyperplanes in $\cH$ are also called \emph{walls}.
If the action of $W$ on $\RR^m$ is cocompact, then there is a compact convex polytope $C$,
bounded by a finite set of hyperplanes from $\cH$, which is a fundamental domain
for the $W$-action. The reflections $i_1,\ldots,i_r$ along the codimension~$1$ faces of $C$ generate $W$ 
as a Coxeter group, and $(W,I)$ is called a Coxeter system of \emph{euclidean type},
for $I=\{i_1,\ldots,i_r\}$. We refer to \cite[Ch.~VI.1]{Br}.

The Davis realization of the Coxeter system 
in its CAT(0) metric is then  
isometric to the euclidean space $\RR^m$. If the Coxeter system is irreducible, then 
the Tits realization coincides with the Davis realization, and $C$ is a simplex. Otherwise, the Davis realization
is the cartesian product of the Tits (or Davis) realizations of the irreducible components of the Coxeter system,
and $C$ is a cartesian product of simplices (a \emph{complex polysimplicial} in the language of \cite{BT1}).
We call a building $\cB$ of type $(W,I)$ a \emph{euclidean building}.
\end{Def}
The books \cite{AB}, \cite{Br}, \cite{Ro} and \cite{Weiss2} discuss mainly the case of euclidean buildings where the Coxeter system is irreducible.
However, for the results that we consider here irreducibility is not really important.
One just has to keep in mind that the Davis realization is
the cartesian product of the Davis realizations of the irreducible factors, and that the fundamental chamber is no longer a simplex,
but rather a cartesian product of simplices.
Examples of euclidean buildings arise from semisimple algebraic groups over fields with discrete valuations \cite{BT1}.
The CAT(0) Davis realization of a euclidean building is a euclidean building in the sense of Kleiner and Leeb \cite{KL}
(but not conversely). We refer to \cite{Par}, \cite{Da}, and \cite[Ch.~12]{AB}. 
\begin{Thm}\label{ProperEuc}
Let $\cB$ be a euclidean building and let $G$ be a topological group that 
acts as a group of automorphisms on $\cB$. Assume also that the action on the chambers is proper.
The following are equivalent.
\begin{enumerate}[\rm(i),nosep]
  \item The $G$-action is Weyl-transitive.
  \item The $G$-action is strongly transitive on the maximal atlas.
 \end{enumerate}
\end{Thm}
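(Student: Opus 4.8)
The implication (ii)$\Rightarrow$(i) is already recorded, since strongly transitive actions are Weyl-transitive, so the plan is to prove (i)$\Rightarrow$(ii). Because $\cB$ is euclidean, $W$ is infinite and Proposition~\ref{linetrans} applies. Strong transitivity on the maximal atlas asks, for every incidence pair $(a,A)$ with $a\in A$ and every $(a',A')$, for an element $g\in G$ with $ga=a'$ and $gA=A'$; note that the maximal atlas is automatically preserved, as any automorphism carries apartments to apartments. First I would fix a color-preserving building isomorphism $\psi\colon A\longrightarrow A'$ with $\psi(a)=a'$. Such a $\psi$ exists because each apartment is isomorphic to the Cayley graph of $(W,I)$ (Example~\ref{ThinEx}), on which $W$ acts simply transitively and color-preservingly by left translation: take any isomorphism $A\to A'$ and correct it by the unique translation of $A'$ carrying the image of $a$ to $a'$.

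Next I would manufacture a single line adapted to $a$ and $A$. Working in the CAT(0) Davis realization, $|A_D|$ is a flat isometric to $\RR^m$. Choose a point $p$ in the interior of the maximal simplex $s_a$ of $\Delta_D$ corresponding to $a$, and a \emph{regular} direction $v$, that is, a direction parallel to none of the finitely many wall directions of the euclidean Coxeter complex. The straight line $L(t)=p+tv$ lies in the flat $|A_D|$, and since flats are totally geodesic it is an isometric embedding $\RR\longrightarrow|\Delta_D|$, hence an infinite line in the sense used above. Put $L'=\psi\circ L$; then $L'(\RR)\subseteq|A'_D|$, and $L$ and $L'$ are of the same type, witnessed by $\psi$. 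Proposition~\ref{linetrans} now yields $g\in G$ with $g\circ L=L'$.

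I would then verify that this $g$ works. For the chamber, $g(L(0))=L'(0)=\psi(p)$ lies in the interior of $s_{a'}=\psi(s_a)$ while $L(0)=p$ lies in the interior of $s_a$; as $g$ is simplicial it must send $s_a$ to $s_{a'}$, so $ga=a'$. For the apartment, $gA$ is an apartment containing $g(L(\RR))=L'(\RR)$, and $A'$ is an apartment containing $L'(\RR)$; thus it suffices to know that a regular line is contained in a unique apartment.

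This last fact is where the euclidean hypothesis is essential, and I expect it to be the main obstacle. Suppose a regular line $\ell$ lies in two apartments $A_1$ and $A_2$. Since $\ell(0)$ is interior to a maximal simplex, that chamber belongs to $A_1\cap A_2$, so the intersection contains a chamber; by the convexity of intersections of apartments it is an intersection of roots of $A_1$ \cite{AB}. If $A_1\neq A_2$, this intersection is contained in a proper root $\alpha\subsetneq A_1$, whose realization $|\alpha|$ is a closed half-space of $\RR^m\cong|A_{1,D}|$ bounded by a wall-hyperplane $H$. But $\ell$ is regular, hence not parallel to $H$, so $\ell$ crosses $H$ and has points strictly on both sides, contradicting $\ell(\RR)\subseteq|A_1\cap A_2|\subseteq|\alpha|$. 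Therefore $A_1=A_2$. Applying this to $\ell=L'$ gives $gA=A'$, which together with $ga=a'$ establishes strong transitivity on the maximal atlas.
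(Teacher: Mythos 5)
Your argument is correct and rests on the same two pillars as the paper's proof: Proposition~\ref{linetrans} applied to a regular line, and the fact that a regular line in the Davis realization lies in a unique apartment. The differences are organizational but genuine. First, by normalizing the isomorphism $\psi\colon A\longrightarrow A'$ so that $\psi(a)=a'$ and threading the line through the interior of a maximal simplex with minimal vertex $\{a\}$, you obtain an element $g$ with $ga=a'$ and $gA=A'$ in one stroke; the paper instead first proves transitivity on apartments and then argues separately that the stabilizer of $A$ induces all of $W$ on $A$ (using that only the identity of $W$ fixes a regular line pointwise). Your route makes that second step unnecessary. Second, where the paper cites \cite[4.6.4]{KL} for the uniqueness of the apartment containing a regular line, you give a direct argument via convexity of apartment intersections and Tits's theorem that a convex chamber subcomplex of a Coxeter complex is an intersection of roots; this is sound, but one step deserves more care: you pass from $\ell(\RR)\subseteq|A_{1,D}|\cap|A_{2,D}|$ to $\ell(\RR)\subseteq|\alpha|$, whereas a priori the intersection of the two realizations could be larger than the realization of the simplicial intersection $A_1\cap A_2$. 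This is repaired by observing that, since $\ell$ is regular, for all $t$ outside a discrete set the point $\ell(t)$ lies in the open Davis chamber of a uniquely determined chamber $c_t$, which must then belong to $A_1\cap A_2\subseteq\alpha$, and the containment $\ell(\RR)\subseteq|\alpha|$ follows by continuity. A final cosmetic point: chambers are vertices, not maximal simplices, of $\Delta_D$, so ``the maximal simplex $s_a$ corresponding to $a$'' should read ``a maximal simplex whose minimal vertex is $\{a\}$'' (the paper's own proof of Proposition~\ref{linetrans} uses the same device); this does not affect the argument.
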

\begin{proof}
We noted before that (ii) implies (i).
Suppose that the action is Weyl-transitive. By Proposition~\ref{linetrans}, it is transitive on all infinite lines
of a fixed type in its Davis realization. Let $A$ be an apartment in $\cB$.
If the infinite line $L$ in $|A_D|$ is not at 
bounded distance from any wall in $|A_D|$, then $L$ is a regular geodesic in the sense of \cite{KL},
and $A$ is the unique apartment containing $L$~\cite[4.6.4]{KL}.
It follows that $G$ acts transitively on the set of all apartments in $\cB$, and hence transitively on the maximal atlas.
The Coxeter group $W$ acts sharply transitively on the chambers in $A$.
For every $w\in W$, the composite $L'=w\circ L$ is an infinite line of the same type as $L$.
It follows that there is an element $g\in G$ with $gL'=L$. This element fixes $A$ (because $A$ is the unique apartment
containing $L$ and $L'$), and $g|_A=w$ because the only element of $W$ that fixes $L$ pointwise is the identity.
This shows that the $G$-stabilizer of $A$ acts transitively on the chambers in
$A$, and hence $G$ acts strongly transitively on the maximal atlas of $\cB$.
\end{proof}
\begin{Def}
Suppose that $(W,I)$ is a Coxeter system of euclidean type acting on $\RR^m$, as in Definition~\ref{EucDef}.
The Coxeter group $W$ decomposes as a semidirect product $W=\ZZ^{m}\rtimes W_0$,
for some spherical (i.e.~finite) Coxeter group $W_0\subseteq W$. 
We may assume that the origin $0$ is the unique fixed point of $W_0$.
The walls passing through $0$ subdivide $\RR^m$ into finitely many infinite cones
which are called \emph{sectors} or \emph{Weyl chambers}.

We call an atlas $\bf A$ for a euclidean building $\cB$ \emph{good} \cite[11.8.4]{AB} if it satisfies the following extra condition:
If $S_1\subseteq A_1$ and $S_2\subseteq A_2$ are sectors in apartments $A_1,A_2\in\bf A$, then there
is an apartment $A\in \bf A$ and sectors $S_1',S_2'\subseteq A$, with 
$S_1'\subseteq S_1$ and $S_2'\subseteq S_2$. In this situation, there is a spherical building 
$\partial_{\bf A}\cB$ attached to $(\cB,\bA)$, the \emph{spherical building at infinity}
\cite[11.8.4]{AB} \cite[Ch.~8]{Weiss2}. The chambers of this spherical building correspond to equivalence classes of sectors in apartments in $\bf A$, where two
sectors are called equivalent if their intersection contains a sector. The apartments of
$\partial_{\bf A}\cB$ are in one-to-one correspondence with the apartments in $\bf A$ \cite[Prop.~8.27]{Weiss2}.
The maximal atlas of $\cB$ is a good atlas \cite[7.24]{Weiss2}, and in this case the spherical building at infinity
can be identified with the Tits boundary of the Davis realization in its CAT(0) metric.
\end{Def}
We apply Theorem~\ref{ProperEuc} to Bruhat-Tits buildings, as defined in \cite[Def.~13.1]{Weiss2}.
\begin{Def}\label{BTB}
A \emph{Bruhat-Tits building} (a \emph{Bruhat-Tits pair} in the terminology of \cite{Weiss2})
consists of an irreducible thick euclidean building $\cB$ and a good atlas $\bA$,
such the spherical building at infinity $\partial_\bA\cB$ is a Moufang building  (as defined in \cite[29.15]{Weiss2}).
Note that from the definition of a Moufang building, the dimension of $\cB$ is at least 2.
The classification of these buildings is carried out in detail in \cite{Weiss2}.
We remark that every thick irreducible euclidean building of rank at least $4$ (simplicial dimension at least $3$)
is automatically a Bruhat-Tits building, for any good atlas \cite[p.~119]{Weiss2}.
We refer also to the classification \cite{GKVW}, and to \cite{CapCio} and \cite{KS} for conditions implying strong transitivity.
\end{Def}
\begin{Thm}\label{Arthur}
Let $(\cB,\bA)$ be a locally finite Bruhat-Tits building, and let $G^\dagger\subseteq\Aut(\cB)$ denote the group generated by
all root groups of the spherical building $\partial_\bA\cB$. Then the closure $H$ of $G^\dagger$ (in the topology of pointwise convergence on chambers)
is locally compact, second countable, totally disconnected, acts properly on the chambers, and strongly transitively on the maximal atlas of $\cB$.
\end{Thm}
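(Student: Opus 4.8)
The plan is to separate the topological assertions about $H$ from the dynamical one, deriving the former from the earlier sections and reducing the latter to Theorem~\ref{ProperEuc} once Weyl-transitivity has been secured.

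First I would dispatch the four topological properties. Because $\cB$ is locally finite, Theorem~\ref{AutBuilding} tells us that $\Aut(\cB)$ is a closed, totally disconnected, locally compact subgroup of $\Iso(C)$ that acts properly on the chamber set $C$; moreover the metric space $(C,d)$ with $d(a,b)=\ell(\delta(a,b))$ is proper (its balls are finite), so Theorem~\ref{IsoPropThm} guarantees in addition that $\Iso(C)$ is second countable. Since $H=\overline{G^\dagger}$ is by construction a closed subgroup of $\Aut(\cB)$, it inherits all four properties: it is a closed subspace of the locally compact Hausdorff group $\Aut(\cB)$, hence locally compact; a subspace of the second countable space $\Iso(C)$, hence second countable; a subspace of the totally disconnected space $C^C$, hence totally disconnected; and, as $C$ is a closed $H$-invariant subset of the proper $\Aut(\cB)$-space, Corollary~\ref{InducedIsProper} shows that the $H$-action on $C$ is proper.

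Next I would establish Weyl-transitivity, which is the substantive point and rests on the Moufang structure of $\partial_\bA\cB$ rather than on anything internal to this paper. The root groups of a Moufang spherical building generate a group acting strongly transitively on it, and under the Bruhat--Tits correspondence these root groups act on $\cB$, with the apartments of $\partial_\bA\cB$ matching the apartments of $\bA$. Transitivity on the apartments at infinity yields transitivity on the apartments in $\bA$, while the Weyl elements $m(u)$ attached to root-group elements act on a fixed apartment as affine reflections in a family of parallel walls and so generate the full affine Weyl group $W$, which is simply transitive on chambers. Thus $G^\dagger$ acts strongly transitively on the good atlas $\bA$ (this is the content of Bruhat--Tits theory as developed in \cite{Weiss2}), and strong transitivity on any atlas forces Weyl-transitivity: given $\delta(a,b)=\delta(a',b')=w$, pick $A,A'\in\bA$ containing $\{a,b\}$ and $\{a',b'\}$ together with $g\in G^\dagger$ satisfying $ga=a'$ and $gA=A'$, whereupon thinness of $A'$ forces $gb=b'$. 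Since $G^\dagger\subseteq H$, the group $H$ is Weyl-transitive as well.

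Finally I would combine the two halves: a Bruhat-Tits building is in particular a euclidean building, and $H$ acts properly and Weyl-transitively on it, so Theorem~\ref{ProperEuc} promotes the action to one that is strongly transitive on the maximal atlas, which completes the proof. I expect the middle step to be the main obstacle, since one genuinely needs the external structure theory of Moufang buildings to produce Weyl-transitivity. It is worth stressing that passing to the closure $H$ is not cosmetic: $G^\dagger$ itself need not be closed, so its action need not be proper, and it is precisely the properness of the closed group $H$ that licenses the use of Theorem~\ref{ProperEuc} to upgrade strong transitivity from the good atlas $\bA$ to the full maximal atlas.
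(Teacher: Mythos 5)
Your proposal is correct and follows essentially the same route as the paper: establish that $G^\dagger$ acts strongly transitively on the good atlas $\bA$ (hence Weyl-transitively) via the Moufang structure at infinity, deduce the topological properties of $H$ from Theorems~\ref{AutBuilding} and~\ref{IsoPropThm}, and then upgrade to strong transitivity on the maximal atlas via Theorem~\ref{ProperEuc}. The only cosmetic difference is that you extract local compactness and second countability from $\Iso(C)$ with the gallery metric, whereas the paper uses $\Iso(|\Delta_D|)$ for the Davis realization; these give the same topology on $\Aut(\cB)$.
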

The following example may be instructive.
\begin{Ex}
Let $p$ be a prime number, let \[\nu:\mathbb{Q}\longrightarrow \ZZ\cup\{\infty\}\] denote the $p$-adic valuation
and let \[A=\{a\in\mathbb Q\mid \nu(a)\geq 0\}\subseteq\mathbb Q\] denote the corresponding valuation ring.
Then $\mathbb Q^m$ is an $A$-module.
A \emph{lattice} $L\subseteq\mathbb Q^m$ is an $A$-submodule which is free of rank~$m$. Two such lattices $L,L'\subseteq\mathbb{Q}^m$ are equivalent if there
is $q\in\mathbb Q^\times$ with $qL=L'$.
Let $\cB$ denote the euclidean building whose vertices are the equivalence classes of lattices \cite[V.8]{Br}.
The group $G=\mathrm{PSL}_m(\mathbb Q)$ acts strongly transitively on $\cB$, with respect to a good atlas
$\bA$ whose apartments correspond to $m$-tuples of points in general position in the projective space
$\mathbb{Q}P^{m-1}$. If $m\geq 3$, then $(\cB,\bA)$ is a Bruhat-Tits building in the sense of Definition~\ref{BTB}. The building $\cB$ is locally finite and 
hence proper, but the group $G=G^\dagger$ is not locally compact. However, it acts Weyl-transitively on $\cB$.
The closure of $G$ in the automorphism group is the group 
$\mathrm{PSL}_m(\mathbb{Q}_p)$. 
\end{Ex}
\begin{proof}[Proof of Theorem~\ref{Arthur}]
By \cite[Thm. 12.31]{Weiss2}, the group $G^\dagger$ can be considered a subgroup
of $\Aut(\cB)$. 
Since the building at infinity $\partial_\bA\cB$ is Moufang, the group $G^\dagger$ acts transitively on the set of its apartments. 
By \cite[8.27]{Weiss2}, the group $G^\dagger$
acts transitively on the atlas $\bA$. Let $A$ be an apartment of $\cB$ 
and let $W'$ denote the subgroup of $\Aut(A)$ induced by the stabilizer of $A$ 
in $G^\dagger$.  By \cite[Prop. 13.5 and Prop. 13.28]{Weiss2}, 
$W'$ contains the Weyl group $W$ of $A$. In particular,
$W'$ acts transitively on the chambers of $A$. Thus $G^\dagger$ acts
strongly transitively on $\bA$. In particular, $G^\dagger$ acts Weyl-transitively on $\cB$.

Since $\Aut(\cB)$ is closed in the isometry group $\Iso(|\Delta_D|)$ of the Davis realization by 
Theorem~\ref{AutBuilding}, the group $H$ is contained in $\Aut(\cB)$.
Moreover, the group $H$ acts properly on $\cB$.
Hence $H$ acts strongly transitively on the maximal atlas by Theorem~\ref{ProperEuc}.

The group $\Iso(|\Delta_D|)$ is locally compact and second countable by Theorem~\ref{IsoPropThm},
and $\Aut(\cB)$ is totally disconnected by Theorem~\ref{AutBuilding}.
\end{proof}

\subsubsection*{Acknowledgments}
Arthur Bartels' request for a solid reference for Theorem~\ref{Arthur} initiated a first, three page version of the present article.
I thank Bertrand R\'emy and Guy Rousseau for some remarks on Bruhat-Tits buildings.
Stefan Witzel, Herbert Abels and Polychronis Strantzalos spotted mistakes and pointed out some more references.
In particular, the authors kindly informed me that the forthcoming book \cite{AS} will contain the results that we present in Section~2.
Richard Weiss made helpful comments on the proof of Theorem~\ref{Arthur} and the material surrounding it.
Philip M\"oller read the article, spotted several mistakes and made many good suggestions.
I would also like the referee, who raised several very good questions about an 
earlier version of the manuscript.

\end{document}